\newtheorem{theorem}{Theorem}
\newtheorem*{theorem*}{Theorem}
\newtheorem{definition}[theorem]{Definition}
\newtheorem{lemma}[theorem]{Lemma}
\newtheorem{proposition}[theorem]{Proposition}
\newtheorem{corollary}[theorem]{Corollary}
\newtheorem{example}[theorem]{Example}
\newtheorem{examples}[theorem]{Examples}
\theoremstyle{definition}
\newtheorem{remark}[theorem]{Remark}
\newtheorem{remarks}[theorem]{Remarks}
\newcommand{\hh}{{\mathbb{H}}}
\newcommand{\HH}{{\mathbb{H}}}
\newcommand{\cc}{{\mathbb{C}}}
\newcommand{\rr}{{\mathbb{R}}}
\newcommand{\zz}{{\mathbb{Z}}}
\newcommand{\nn}{{\mathbb{N}}}
\newcommand{\s}{{\mathbb{S}}}
\newcommand{\SR}{\mathcal{SR}}
\newcommand{\SF}{\mathcal{S}}
\newcommand{\AM}{\mathcal{AM}}
\newcommand{\SC}{\mathcal{SC}}
\newcommand{\I}{\mathcal{I}}
\newcommand{\LL}{\mathcal{L}}
\newcommand{\Pn}{\mathcal{P}}
\newcommand{\dcf}{\dibar}
\newcommand{\dibar}{\overline\partial}
\newcommand{\cdcf}{\partial}
\newcommand\IM{\operatorname{Im}}
\newcommand\sd[1]{{#1}'_s}
\newcommand{\punto}{\bullet}
\newcommand\im{\operatorname{Im}}
\newcommand\Ker{\operatorname{Ker}}
\newcommand\sinc{\operatorname{sinc}}
\newcommand{\ui}{\imath}
\newcommand{\OO}{\Omega}
\newcommand{\mbb}{\mathbb}
\newcommand{\R}{\mbb{R}}
\newcommand{\cdf}{\frac{\partial f}{\partial x}}
\newcommand{\C}{\mbb{C}}
\newcommand{\Zt}{\widetilde{\mathcal Z}}
\newcommand{\Dl}{\mathcal D_\lambda}
\newcommand{\Al}{\mathcal A_\lambda}
\newcommand{\Sl}{\mathcal S_\lambda}
\newcommand{\Dm}{\mathcal D_\mu}
\newcommand{\DL}[1]{\mathcal D_{\lambda_{#1}}}
\newcommand{\LLL}[1]{\mathcal L_{\lambda_{#1}}}
\newcommand{\El}{\mathcal E_\lambda}
\newcommand{\ID}{\widetilde\Delta}
\begin{document}

\title{Eigenvalue problems for slice functions}

\author{Rolf S\"oren Krau{\ss}har}
\address{Fachbereich Fachwissenschaft Mathematik,
Erziehungswissenschaftliche Fakult\"at,
Universit\"at Erfurt,
Nordh\"auser Str. 63, 
99089 Erfurt, Germany
}
\email{soeren.krausshar@uni-erfurt.de}

\author{Alessandro Perotti}
\address{Department of Mathematics, University of Trento, Via Sommarive 14, Trento, Italy}
\email{alessandro.perotti@unitn.it}

\thanks{The second author was supported by GNSAGA of INdAM, and by the grants ``Progetto di Ricerca INdAM, Teoria delle funzioni ipercomplesse e applicazioni'', and PRIN ``Real and Complex Manifolds: Topology, Geometry and holomorphic dynamics'' of Ministero dell'universit\`a e della ricerca.}

\begin{abstract} This paper addresses particular eigenvalue problems within the context of two quaternionic function theories.  
More precisely, we study two concrete classes of quaternionic eigenvalue problems, the first one for the slice derivative operator in the class of quaternionic slice-regular functions and the second one for the Cauchy-Riemann-Fueter operator in the class of axially monogenic functions. The two problems are related to each other by the four-dimensional Laplace operator and Fueter's Theorem. As an application of a particular case of second order eigenvalue problems, we obtain a representation of axially monogenic solutions for time-harmonic Helmholtz and stationary Klein-Gordon equations.   
\end{abstract}

\keywords{Slice-regular functions, Eigenvalue problems, Axially monogenic functions, Cauchy-Riemann operator}
\subjclass[2010]{Primary 30G35; Secondary 47A75, 35J05}

\maketitle


\section{Introduction}
Complex function theory represents a powerful toolkit to study eigenvalue problems related to important differential operators arising in harmonic analysis and mathematical physics in two dimensions. Therefore, a strong motivation in mathematical analysis consists in developing higher dimensional analogues in order to tackle similarly corresponding spatial problems. The smallest division algebra that encompasses the three-dimensional space $\mathbb{R}^3$ is the four-dimensional Hamiltonian skew field $\mathbb{H}$ which is not commutative anymore but still associative. It is the first algebra created beyond the complex numbers by applying the well-known Cayley-Dickson duplication process.    
The concept of holomorphic functions however can be generalized in a number of rather different ways to higher dimensional algebras, even in the simplest context of $\mathbb{H}$. Following the Riemann approach one can consider quaternion-valued functions $f$ that are annihilated either from the left or from the right by the linear Cauchy-Riemann-Fueter operator $\dcf := \frac{1}{2}\Big(\frac{\partial }{\partial x_0} + i \frac{\partial }{\partial x_1}  + j \frac{\partial }{\partial x_2}  + k \frac{\partial }{\partial x_3} \Big)$. Here, $i,j,k$ are the quaternionic imaginary units, satisfying $i^2=j^2=k^2=-1$ and $ij=k$, $jk=i$, $ki=j$ as well as $ji=-k$, $kj=-i$, $ik=-j$. 
Functions in the kernel of this operator are nowadays often called left (resp.\ right) monogenic functions or left (right) Fueter regular functions and have been studied by a constantly growing community for more than a century. As a classical reference we recommend for example \cite{bds}. This function theory is often called Clifford analysis. The Cauchy-Riemann-Fueter operator is nothing else than the Euclidean Dirac operator in $\mathbb{R}^4$ associated to the Euclidean flat metric and is a first order square root of the Laplacian $\Delta_4$.    

An alternative function theory, which actually is even more closely related to classical complex-analytic functions, is the theory of quaternionic slice-regular functions which was basically introduced in 2006-2007 by Gentili and Struppa \cite{GeSt2006CR,GeSt2007Adv}. This function theory exploits a particular slice-structure of $\mathbb{H}$ which is explained together with its most important definitions and relevant function classes of the so-called slice functions and slice-regular functions in Section~2 including the basic references.  
\par\medskip\par

Now the aim of our paper is to investigate some eigenvalue problems for quaternionic slice functions, with particular emphasis on slice-regular functions and on axially monogenic functions. 

\par\medskip\par

The right-linear operator that we first consider is the so-called slice derivative operator $\dd{}x$ acting on the classes of slice or slice-regular functions defined in Section~2. We study eigenvalue problems of the form $\dd{f}{x}=f\lambda$ on an axially symmetric domain $\OO$ of the quaternionic space $\hh$, with $\lambda\in\hh$. We show in Proposition \ref{pro:sol_hom} that in the class of slice functions, the solutions to this problem can be written as slice products of anti-slice-regular functions and a quaternionic exponential function. Slice-regular solutions are obtained when the first factor is a slice-constant function on $\OO$. 

Let $\Dl$ be the right-linear operator defined by $\Dl f=\dd{f}{x}-f\lambda$. In Propositions \ref{pro:polynomial}, \ref{pro:lmII} and $\ref{pro:El}$ we study the associated non-homogeneous eigenvalue problem $\Dl{f}=h$ for a polynomial right-hand side $h$ and for a wide class of entire slice-regular functions. These results in particular permit us to solve (Corollary \ref{cor:order2}) second-order eigenvalue problems $\Dl\Dm f=0$ for any choice of quaternionic eigenvalues $\lambda,\mu$. In Corollary \ref{cor:order_m} we extend this result to the $m$-th order eigenvalue problem $\DL{1}\cdots\DL{m} f=0$ by means of new \emph{generalized exponential functions} (see Definition \ref{def:E}) associated with any ordered $k$-tuple $(\mu_1,\ldots,\mu_k)\in\hh^k$.

The second right-linear operator that we consider is the (conjugated) Cauchy-Riemann-Fueter operator $\cdcf$ acting on the class of quaternionic \emph{monogenic} (or \emph{Fueter-regular}) slice functions, i.e., belonging to the kernel of the Cauchy-Riemann-Fueter operator $\dcf$. Using Fueter's Theorem (see e.g.\ \cite{TheInverseFueter}) as a bridge between the two function theories, we are able to apply the results described above to eigenvalue problems for axially monogenic functions. The crucial fact is the relation $\cdcf\circ\Delta_4=\Delta_4\circ\dd{}{x}$  on the space of slice-regular functions on $\OO$, where $\Delta_4$ is the Laplacian of $\rr^4\simeq\hh$. Applying the operator $\Delta_4$ to the slice-regular solutions obtained above, we obtain in Proposition \ref{pro:am} the general axially monogenic solution of the eigenvalue problem $\cdcf f=f\lambda$, with $\lambda\in\hh$, and more generally (Proposition \ref{pro:am_m}) of the $m$-th order eigenvalue problem $\LLL{1}\cdots\LLL{m} f=0$, where $\LL_{\lambda}$ is defined by $\LL_{\lambda}f=\cdcf f-f\lambda$. 
We refer the reader at the beginning of Sect.~4 to references on the study of similar eigenvalue problems in quaternionic and Clifford analysis.

The final section is devoted to applications. We relate the solutions to $\LLL{1}\LLL{2} f = 0$ to axially monogenic solutions of the three-dimensional time-harmonic Helmholtz and stationary massless Klein-Gordon equation on an axially symmetric domain (Proposition \ref{pro:KGH}). 

Let $\Delta_3$ denote the Laplacian operator on $\rr^3\simeq\IM(\hh)$. 
Suppose that $\lambda_1 = I \lambda$ and $\lambda_2 = - I \lambda$, where $I$ is a quaternionic imaginary unit and where $\lambda$ is an arbitrary non-zero real number. Then the solutions to  $\LL_{I\lambda}\LL_{-I\lambda} f = 0$ are axially monogenic solutions to the massless stationary Klein-Gordon equation $(\Delta_3 - \lambda^2) f = 0$ on $\Omega^{*} := \Omega \cap \mathbb{R}^3$. 
If we take $\lambda_1 = \lambda$ and $\lambda_2 =- \lambda$ where again $\lambda$ is supposed to be a non-zero real value, then the solutions to  $\LL_{\lambda}\LL_{-\lambda} f = 0$  are axially monogenic solutions to the time-harmonic Helmholtz equation $(\Delta_3 + \lambda^2) f = 0$ on $\Omega^{*}$.
Finally, we establish an application (Remark \ref{rem:Yukawa}) to the non-homogeneous equation associated to the Klein-Gordon equation, i.e., the Yukawa equation $(\Delta_3 -\lambda^2)f=h$, with $\lambda$ real and $h$ axially monogenic. 
\par\medskip\par
The paper is structured as follows: In Sect.~2 we recall the basic notions of slice function theory on $\hh$. Then, in Sect.~3, we present the eigenvalue problems for the slice derivative operator on slice and on slice-regular functions. Here we also introduce the generalized exponential functions $E_\Lambda$. This represents another essential novelty of this paper, and we give some examples to illustrate the method of solution. In Sect.~4 we recall Fueter's Theorem and present some new results about the Laplacian of a slice-regular function. Then we 
prove the commutativity relation linking $\Delta$, $\cdcf$ and $\dd{}{x}$ and obtain the axially monogenic solutions to the eigenvalue problems for $\cdcf$, in terms of the generalized $\Delta$-exponential functions $E^\Delta_{\Lambda}$. Finally, in Sect.~5, we present applications of the results of Sect.~4 to the Helmholtz, Klein-Gordon and Yukawa equations and round off our paper by presenting some explicit examples.

\section{Preliminaries}\label{sec:pre}

The theory of quaternionic slice-regular functions was introduced in 2006-2007 by Gentili and Struppa \cite{GeSt2006CR,GeSt2007Adv}. We refer the reader for instance to \cite{CSS2009,GeStoSt2013,GhPe_AIM,AlgebraSliceFunctions,DivisionAlgebras} and also to the references therein for precise definitions and for more results on this class of functions. 
Slice function theory is based on the ``slice'' decomposition of the quaternionic space $\hh$. For each imaginary unit $J$ in the sphere
 \[\s=\{J\in\HH\ |\ J^2=-1\}=\{x_1i+x_2j+x_3k\in\HH\ |\ x_1^2+x_2^2+x_3^2=1\},\]
we denote by $\C_J=\langle 1,J\rangle\simeq\C$ the subalgebra generated by $J$. Then it holds
\[\HH=\bigcup_{J\in \s}\C_J, \quad\text{with $\C_J\cap\C_K=\R$\ for every $J,K\in\s,\ J\ne\pm K$.}\]
A (real) differentiable function $f:\OO\subseteq\HH\rightarrow\HH$  is called \emph{(left) slice-regular} \cite{GeSt2007Adv} on the open set $\OO$ if, for each $J\in\s$, the restriction
$f_{\,|\OO\cap\C_J}\, : \, \OO\cap\C_J\rightarrow \HH$
is holomorphic with respect to the complex structure defined by left multiplication by $J$. 

\subsection*{Slice functions}

A different approach to slice regularity was introduced in \cite{GhPe_Trends,GhPe_AIM}, making use of the concept of \emph{slice function}. These are exactly the quaternionic functions that are compatible with the slice structure of $\hh$. 
Given a set $D\subseteq\C$ that is invariant with respect to complex conjugation, a function $F: D\to \hh\otimes\C$ that satisfies $F(\overline z)=\overline{F(z)}$ for every $z\in D$ is called a \emph{stem function} on $D$. 
Here the conjugation in $\hh\otimes\C$ is the one induced by complex conjugation of the second factor.

Let $D$ be open and let $\OO_D=\cup_{J\in\s_\hh}\Phi_J(D)\subset\hh$, where for any $J\in\s$, the map $\Phi_J:\C\to\C_J$ is the canonical isomorphism defined by $\Phi_J(a+i b):=a+Jb$.
Open domains in $\hh$ of the form $\OO=\OO_D$ are called \emph{axially symmetric} domains. 
Note that every axially symmetric domain $\OO$ is either a \emph{slice domain} if $\OO\cap \R\ne\emptyset$, or it is a \emph{product domain}, namely if $\OO\cap \R=\emptyset$. Moreover, any axially symmetric open set can be represented as a union of a family of domains of these two types. 

The stem function $F=F_1+iF_2$  on $D$ (with $F_1$ and $F_2$ $\hh$-valued functions on $D$) induces the \emph{slice function} $f=\I(F):\OO_D \to \hh$ as follows:
 if $x=\alpha+J\beta=\Phi_J(z)\in \OO_D\cap \C_J$, then
\[f(x):=F_1(z)+JF_2(z).\]
The tensor product $\hh\otimes\C$ can be equipped with the complex structure induced by the second factor.
The slice function $f$ is then called \emph{slice-regular} if $F$ is holomorphic.
If a domain $\OO$ in $\hh$ is axially symmetric and intersects the real axis, then this definition of slice regularity is equivalent to the one proposed by Gentili and Struppa \cite{GeSt2007Adv}. We will denote by $\SR(\OO)$ the right quaternionic module of slice-regular functions on $\OO$.
If $f\in\SR(\hh)$, then $f$ is called a slice-regular \emph{entire} function. In particular, every polynomial $f(x)=\sum_{n=0}^dx^na_n\in\hh[x]$ with right quaternionic coefficients $a_n$, is an entire slice-regular function.

\subsection*{Operations on slice functions}

Let $\OO=\OO_D$ be an open axially symmetric domain in $\hh$. The \emph{slice product} of two slice functions $f=\I(F)$, $g=\I(G)$ on $\OO$ is defined by means of the pointwise product of the stem functions $F$ and $G$:
\[f\cdot g=\I(FG).\]
The function $f=\I(F)$ is called \emph{slice-preserving} if the $\hh$-components $F_1$ and $F_2$ of the stem function $F$ are real-valued.
This is equivalent to the condition $f(\overline x)=\overline{f(x)}$ for every $x\in\OO_D$.
If $f$ is slice-preserving, then $f\cdot g$ coincides with the pointwise product of $f$ and $g$. 
If $f,g$ are slice-regular on $\OO$, then also their slice product $f\cdot g$ is slice-regular on $\OO$. We recall that the slice product has also an interpretation in terms of pointwise quaternionic product:
\[
(f\cdot g)(x)=
\begin{cases}f(x)g(f(x)^{-1}xf(x))&\text{\ if $f(x)\ne0$,}\\0&\text{\ if $f(x)=0$}.
\end{cases}
\]
For every $f\in\SF(\OO)$, the slice function $f^c=\I(F^c)=\I(\overline F_1+i\overline F_2)$ is the \emph{slice-conjugate} of $f$ and $N(f)=f\cdot f^c=f^c\cdot f$  is the \emph{normal function} of $f$. If $f\in\SR(\OO)$, then also $f^c$, $N(f)$ do belong to $\SR(\OO)$. 

Assume that $f\in\SF(\OO)$ is not identically zero on $\OO$. 
Then the zero set $V(N(f))=\{x\in\OO\,|\,N(f)=0\}$ of the normal function does not coincide with the whole set $\OO$. 
The slice product permits us to introduce on $\OO\setminus V(N(f))$ the \emph{slice reciprocal} $f^{-\punto}$ of $f$, as the function $f^{-\punto}:=N(f)^{-1}\cdot f^c$ (see \cite[Prop.\ 2.4]{AlgebraSliceFunctions}). 
Again, if $f$ is slice-regular, then $f^{-\punto}$ is slice-regular, too.

The \emph{slice derivatives} $\dd{f}{x},\dd{f\;}{x^c}$ of a slice functions $f=\I(F)$ are defined by means of the  Cauchy-Riemann operators applied to the inducing stem function $F$:
\[\dd{f}{x}=\I\left(\dd{F}{z}\right),\quad \dd{f\;}{x^c}=\I\left(\dd{F}{\overline z}\right).\]
Note that $f$ is slice-regular on $\OO$ if and only if $\dd{f\;}{x^c}=0$ and if $f$ is slice-regular on $\OO$ then also $\dd{f}{x}$ is slice-regular on $\OO$. Moreover, the slice derivatives satisfy the Leibniz product rule w.r.t.\ the slice product.

A slice-regular function $g$ is called \emph{slice-constant} if $\dd{g}x=0$ on $\OO$. We will denote by $\SC(\OO)$ the right $\hh$-module of slice-constant functions on $\OO$. If $\OO$ is a slice domain, then every $g\in\SC(\OO)$ is a quaternionic constant. If $\OO$ is a product domain, then other possibilities arise: 
for any imaginary unit $I\in\s$, a slice-constant function $\mu_I$ on $\hh\setminus\rr$ can be defined by setting, such as indicated in \cite[Remark 12]{GhPe_AIM}, 
\begin{equation}\label{eq:muI}
\mu_I(x):=\frac12\left(1+\frac{\IM(x)}{|\IM(x)|}I\right)\in\SR(\hh\setminus\R).
\end{equation}
By the representation formula (see \cite[Prop.\ 5]{GhPe_AIM} and \cite[Prop.\ 15]{Altavilla2018}), any slice-constant $g\in\SC(\hh\setminus\rr)$ is determined by its values on two arbitrarily chosen half-slices $\cc_J^+, \cc_K^+$, with $J\ne K$. If $g_{|\cc_J^+}=a_1\in\hh$ and $g_{|\cc_K^+}=a_2\in\hh$, then $g$ can be expressed as
\begin{equation}\label{eq:muJK}
g(x)=2\mu_{J}(x)J(J-K)^{-1}a_2-2\mu_{K}(x)K(J-K)^{-1}a_1.
\end{equation}

\section{Eigenvalue problems for slice functions}

\subsection*{Eigenvalue problem for slice-regular functions}
Let $\OO$ be an axially symmetric domain of $\hh$. Consider the following eigenvalue problem for the slice derivative operator $\dd{}x$ in the class of slice-regular functions:
\begin{equation}\label{eig:sr}
\begin{cases}\displaystyle
\cdf =f\lambda\text{\quad on $\OO$},\\
\text{with }f\in\SR(\OO),\ \lambda\in\hh.
\end{cases}
\end{equation}

\begin{remark}\label{rem:exp}
Let 
$\lambda\in\hh$ and let $e^{z\lambda}:=\sum_{n=0}^{+\infty} \frac{(z\lambda)^n}{n!}$. Then $e^{z\lambda}$ is a holomorphic stem function on $\C$, that induces the slice-regular entire function 
\[\exp_\lambda(x):=\I(e^{z\lambda})=\sum_{n=0}^{+\infty}\frac{x^n\lambda^n}{n!}.\] 
Observe that whenever $\lambda$ is not real, then in general $\exp_\lambda(x)\ne e^{x\lambda}:=\sum_{n=0}^{+\infty} \frac{(x\lambda)^n}{n!}$. The exponential function $\exp_\lambda(x)$ is slice-preserving if and only if $\lambda$ is real. In general, if $\lambda\in\cc_I$, then $\exp_\lambda(x)$ is \emph{one-slice-preserving}, i.e., it maps the slice $\cc_I$ into $\cc_I$.

For any $\lambda\in\hh$, it holds $\dd {\exp_\lambda(x)}x=\exp_\lambda(x)\lambda$, since $\dd {e^{z\lambda}}z=e^{z\lambda}\lambda$, i.e., $f(x)=\exp_\lambda(x)$ is a solution of \eqref{eig:sr} on every $\OO$.
\end{remark}

We consider also a more general eigenvalue problem in which the solutions are searched in the space of slice functions on $\OO$ which are not necessarily slice-regular.

\subsection*{Eigenvalue problem for slice functions}
Let $\OO$ be an axially symmetric domain of $\hh$. Consider the following eigenvalue problem for the slice derivative operator $\dd{}x$ in the class $\SF^1(\OO)$ of slice functions induced by stem functions of the class $C^1$ on $\OO$:
\begin{equation}\label{eig:s}
\begin{cases}\displaystyle
\cdf =f\lambda\text{\quad on $\OO$},\\
\text{with }f\in\SF^1(\OO),\ \lambda\in\hh.
\end{cases}
\end{equation}

\begin{remark}
Let $g\in\SF^1(\OO)$. The function $f=g\cdot \exp_\lambda(x)$ obtained by taking the slice product of $g$ and $\exp_\lambda(x)$ is a solution of \eqref{eig:s} if and only if 
\[\cdf=\dd gx\cdot \exp_\lambda(x)+g\cdot \exp_\lambda(x)\lambda=f\lambda\Leftrightarrow\dd gx\cdot \exp_\lambda(x)=0\text{\quad on }\OO.
\]
Therefore, for any anti-slice-regular function $g\in\SF^1(\OO)$ (i.e., such that $\dd gx=0$ on $\OO$), the function $f=g\cdot \exp_\lambda(x)$ is a solution of \eqref{eig:s}.  Moreover, it holds:
\begin{itemize}
\item
If $\OO$ is a slice domain (i.e., $\OO\cap\rr\ne\emptyset$) and $g$ is constant,
then  $f=g\cdot \exp_\lambda(x)\in\SR(\OO)$ is a solution of \eqref{eig:sr}.
\item
If $\OO$ is a product domain (i.e., $\OO\cap\rr=\emptyset$), then  $f=g\cdot \exp_\lambda(x)\in\SR(\OO)$ is a solution of \eqref{eig:sr} for every slice-constant function $g$. 
\end{itemize}
Conversely, if $f$ is a solution of \eqref{eig:s}, then set $g:=f\cdot (\exp_\lambda(x))^{-\punto}$. 
Since the normal function of $\exp_\lambda(x)$ is $N(\exp_\lambda(x))=e^{xt(\lambda)}$, where $t(\lambda)=\lambda+\overline\lambda$ is the \emph{trace} of $\lambda$, it holds $V(N(\exp_\lambda(x)))=\emptyset$. More precisely, it holds $(\exp_\lambda(x))^{-\punto}=\exp_{-\lambda}(x)$, since if $\lambda\in\cc_J$, then $\exp_\lambda(x)$ 
is a one-slice preserving function such that $\exp_\lambda(x)\cdot \exp_{-\lambda}(x)=1$ on the slice $\cc_J$, and therefore, by the representation formula (see \cite[Prop.\ 5]{GhPe_AIM}), the equality holds on the whole space $\hh$. We have
\begin{align*}
\dd gx&=\dd {(f\cdot \exp_{-\lambda}(x))}x=\cdf\cdot \exp_{-\lambda}(x)+f\cdot(-\exp_{-\lambda}(x)\lambda)\\
&=
(f\lambda)\cdot \exp_{-\lambda}(x)-f\cdot \exp_{-\lambda}(x)\lambda=0,
\end{align*}
and then $f=g\cdot\exp_\lambda(x)$, with $g\in \SF^1(\OO)$ anti-slice-regular.
\end{remark}

Denote by $\overline{\SR}(\OO)$ the right $\hh$-module of anti-slice-regular functions on $\OO$. Let $g=\I(G_1+\ui G_2)$. Then $g\in\overline{\SR}(\OO)$ if and only if the slice function $\overline g=\I(G_1-\ui G_2)$ is slice-regular, i.e., the stem function $G:=G_1+\ui G_2$ inducing $g$ is anti-holomorphic. We can summarize the previous remarks in the following statement.

\begin{proposition}\label{pro:sol_hom}
Let $\OO\subset\hh$ be an axially symmetric domain.
A function $f\in\SF^1(\OO)$ is a solution of \eqref{eig:s} if and only if $f=g\cdot \exp_\lambda(x)$, with $g\in\overline\SR(\OO)$ anti-slice-regular. The solution $f$ is slice-regular, and then a solution of \eqref{eig:sr}, if and only if $g\in\overline\SR(\OO)\cap\SR(\OO)=\SC(\OO)$, i.e., $g$ is slice-constant on $\OO$ (constant if $\OO$ is a slice domain).\qed
\end{proposition}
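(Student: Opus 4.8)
The plan is to prove the equivalence by establishing the two implications separately, treating $\exp_\lambda(x)$ as a slice-invertible building block, and then to read off the slice-regular case as a refinement. Throughout I would use freely the Leibniz rule for the slice derivative with respect to the slice product and the identity $\frac{\partial \exp_\lambda(x)}{\partial x}=\exp_\lambda(x)\lambda$ recorded in Remark~\ref{rem:exp}.

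For the \emph{sufficiency}, I would take $g\in\overline\SR(\OO)$, so that $\frac{\partial g}{\partial x}=0$ on $\OO$, and set $f=g\cdot\exp_\lambda(x)$. The Leibniz rule gives $\frac{\partial f}{\partial x}=\frac{\partial g}{\partial x}\cdot\exp_\lambda(x)+g\cdot\big(\exp_\lambda(x)\lambda\big)=g\cdot\big(\exp_\lambda(x)\lambda\big)$. The one routine point is that right multiplication by the constant $\lambda$ pulls out of the slice product, i.e.\ $g\cdot\big(\exp_\lambda(x)\lambda\big)=\big(g\cdot\exp_\lambda(x)\big)\lambda=f\lambda$; this follows from the associativity of the pointwise product of stem functions in $\hh\otimes\C$ and the fact that $\lambda\in\hh$ is a constant, so that $f$ solves \eqref{eig:s}.

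For the \emph{necessity}, the key preliminary is to show that $\exp_\lambda(x)$ is slice-invertible with $(\exp_\lambda(x))^{-\punto}=\exp_{-\lambda}(x)$. I would first compute the normal function $N(\exp_\lambda(x))=\exp_{t(\lambda)}(x)$, where $t(\lambda)=\lambda+\overline\lambda$ is real; since a real exponential never vanishes, $V(N(\exp_\lambda(x)))=\emptyset$ and the slice reciprocal is globally defined. To identify it with $\exp_{-\lambda}(x)$, I would use that if $\lambda\in\cc_J$ then $\exp_\lambda(x)$ is one-slice-preserving, so that $\exp_\lambda(x)\cdot\exp_{-\lambda}(x)=1$ can be checked by a scalar computation on the single slice $\cc_J$ and then propagated to all of $\hh$ by the representation formula \cite{GhPe_AIM}. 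Granting this, given a solution $f$ of \eqref{eig:s} I would set $g:=f\cdot\exp_{-\lambda}(x)$ and compute $\frac{\partial g}{\partial x}$ by the Leibniz rule, using $\frac{\partial f}{\partial x}=f\lambda$ and $\frac{\partial \exp_{-\lambda}(x)}{\partial x}=-\exp_{-\lambda}(x)\lambda$. The cancellation $\frac{\partial g}{\partial x}=(f\lambda)\cdot\exp_{-\lambda}(x)-\big(f\cdot\exp_{-\lambda}(x)\big)\lambda=0$ again hinges on commuting $\lambda$ past the exponential; here I would note that in $\hh\otimes\C$ the factor $\lambda$ commutes with $e^{-z\lambda}=\sum_n(-z\lambda)^n/n!$, since both are generated by $\lambda$ and complex scalars. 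Thus $g\in\overline\SR(\OO)$, and associativity together with $\exp_{-\lambda}(x)\cdot\exp_\lambda(x)=1$ yields $f=g\cdot\exp_\lambda(x)$.

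Finally, for the \emph{slice-regular refinement}, I would observe that $\exp_{\pm\lambda}(x)\in\SR(\OO)$ and that the slice product of slice-regular functions is slice-regular; hence $f\in\SR(\OO)$ if and only if $g=f\cdot\exp_{-\lambda}(x)\in\SR(\OO)$. Combined with $g\in\overline\SR(\OO)$ this forces $g\in\SR(\OO)\cap\overline\SR(\OO)$, whose inducing stem function is simultaneously holomorphic and anti-holomorphic, hence locally constant; by the characterizations recalled in Section~\ref{sec:pre} this is exactly $\SC(\OO)$ (a quaternionic constant when $\OO$ is a slice domain). The main obstacle in the whole argument is the invertibility step $(\exp_\lambda(x))^{-\punto}=\exp_{-\lambda}(x)$: once the slice reciprocal is pinned down by the one-slice computation and the representation formula, every remaining step is a bookkeeping application of the Leibniz rule and of the commutation of $\lambda$ with the exponential.
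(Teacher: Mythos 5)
Your proposal is correct and follows essentially the same route as the paper's own argument (given in the remarks preceding the proposition): sufficiency via the Leibniz rule, necessity via the identification $(\exp_\lambda(x))^{-\punto}=\exp_{-\lambda}(x)$ through the non-vanishing normal function $e^{x t(\lambda)}$ and the one-slice computation propagated by the representation formula, and the refinement via $\SR(\OO)\cap\overline\SR(\OO)=\SC(\OO)$. In fact you make explicit two commutation points (pulling the constant $\lambda$ across the slice product, and $\lambda$ commuting with $e^{-z\lambda}$ in $\hh\otimes\C$) that the paper uses silently.
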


Let $R_\lambda$ be the operator of right multiplication by $\lambda$ and let $\mathcal D_\lambda=\dd{}x-R_\lambda$ denote the linear operator mapping a slice function $f\in\SF^1(\OO)$  to the continuous slice function 
\[\Dl f=\cdf -f\lambda.\]
Note that $\Dl f$ is slice-regular for any slice-regular function $f$. Another useful property of the operator $\Dl$ is the following: for every slice-constant $g\in\SC(\OO)$ and every $f\in\SF^1(\OO)$, it holds
\[\Dl(g\cdot f)=g\cdot \Dl f.
\]

\begin{remark}\label{rem:unique}
When $\OO$ is a slice domain, in view of Proposition \ref{pro:sol_hom} it holds 
\[\Ker (\Dl)\cap\SR(\OO)=\{c\cdot \exp_\lambda(x)\,|\, c\in\hh\}.
\]
If  $y\in\OO\cap\rr$, the solution of \eqref{eig:sr} is uniquely determined by its value at $y$, and takes the form $f=c\cdot \exp_\lambda(x)$, with $c=f(y)\exp_{-\lambda}(y)$. 

If $\OO$ is a product domain, then the solutions of \eqref{eig:sr} have more degrees of freedom (eight real d.o.f.\ instead of four).
If $\mu_I\in\SC(\hh\setminus\rr)$ is the function defined in \eqref{eq:muI}, then formula \eqref{eq:muJK} for $J=-K=I$ implies  that $g:=\mu_I a_2+\mu_{-I}a_1$ is the unique slice-constant function on $\hh\setminus\rr$ with $g_{|\cc_I^+}=a_1$, $g_{|\cc_I^-}=a_2$. We then obtain, for every $I\in\s$, the representation
\[\Ker (\Dl)\cap\SR(\OO)=\{(\mu_I a_2)\cdot \exp_\lambda(x)+(\mu_{-I}a_1)\cdot \exp_\lambda(x)\,|\, a_1,a_2\in\hh\}
\]
for the kernel of the operator $\Dl$ on slice-regular functions on $\OO$. A function $f=g\cdot \exp_\lambda(x)=(\mu_I a_2)\cdot \exp_\lambda(x)+(\mu_{-I}a_1)\cdot \exp_\lambda(x)\in\Ker(\Dl)$ is uniquely determined by its values at two points, for example by $I$ and $-I$. Since $f\cdot \exp_{-\lambda}(x)=g$, a direct computation shows that
\[
a_1=\begin{cases}
f(I)\exp_{-\lambda}(f(I)^{-1}If(I))&\text{\quad if }f(I)\ne0,\\
0&\text{\quad if }f(I)=0,
\end{cases}
\]
and
\[
a_2=
\begin{cases}
f(-I)\exp_\lambda(f(-I)^{-1}If(-I))&\text{\quad if }f(-I)\ne0,\\
0&\text{\quad if }f(-I)=0,
\end{cases}
\]
where we used the fact that $\exp_{-\lambda}(-x)=\exp_\lambda(x)$ for every $x,\lambda\in\hh$.
\end{remark}

The eigenvalue problem \eqref{eig:sr} is the  homogeneous problem associated to the following generalized eigenvalue problem. Given $h\in\SR(\OO)$ and $\lambda\in\hh$, find $f\in\SR(\OO)$ such that
 \begin{equation}\label{eig:s_nonhom}
 \Dl f=h\text{\quad on $\OO$}. 
 \end{equation}

\begin{remark}\label{rem:01}
Assume that $\lambda\ne0$.
If $h(x)=x c_1+c_0$ is linear ($c_0,c_1\in\hh$), then 
$f(x)=-c_1\cdot (\lambda^{-2}+x\lambda^{-1})-c_0\lambda^{-1}$ is a solution of \eqref{eig:s_nonhom}, since
\begin{align*}
\Dl f&=-c_1\cdot \dd{(\lambda^{-2}+x\lambda^{-1})}x+c_1\cdot (\lambda^{-2}+x\lambda^{-1})\lambda+c_0=\\
&=-c_1\lambda^{-1}+c_1\cdot(\lambda^{-1}+x)+c_0=c_1\cdot x+c_0=xc_1+c_0=h(x).\\
\end{align*}

In view of the linearity of \eqref{eig:s_nonhom}, the previous solution is uniquely determined up to solutions of the homogeneous problem \eqref{eig:sr}. From Proposition \ref{pro:sol_hom} we can infer that the general solution is
\[f(x)=a_0\cdot \exp_\lambda(x)-c_1\cdot (\lambda^{-2}+x\lambda^{-1})-c_0\lambda^{-1},
\]
for any $a_0\in\hh$. On product domains the general solutions have similar forms, with any slice-constant function $g\in\SC(\OO)$ in place of the constant $a_0$.
\end{remark}

Remark \ref{rem:01} is a hint for finding solutions of \eqref{eig:s_nonhom} for every polynomial right-hand side $h\in\hh[x]$ and $\lambda\ne0$. In this case we can take for $\OO$ the whole quaternionic space $\hh$ (a slice domain) or the product domain $\hh\setminus\rr$.

\begin{proposition}\label{pro:polynomial}
Let $\lambda\ne0$ and let $h(x)=\sum_{n=0}^dx^nc_n\in\hh[x]$. For any $n\in\nn$, let $f_n\in\hh[x]$ be the slice-regular polynomial
\[f_n(x)=-c_n\cdot \left(\sum_{k=0}^n \frac{x^k\lambda^k}{k!}\right)n!\lambda^{-n-1}.
\]
Then the polynomial $f_*=\sum_{n=0}^df_n$ is a solution of \eqref{eig:s_nonhom} with right-hand side $h$. The general solution of \eqref{eig:s_nonhom} on $\OO$ is $f=f_*+g\cdot \exp_\lambda(x)$, with $g\in\SC(\OO)$. 
If $\OO=\hh$, a slice domain, and $y\in\rr$, then the solution of \eqref{eig:s_nonhom} 
is uniquely determined by its value at $y$, and takes the form $f=f_*+c\cdot \exp_\lambda(x)$, with $c=(f(y)-f_*(y))\exp_{-\lambda}(y)\in\hh$. If $\OO=\hh\setminus\rr$, a product domain, then the solution of \eqref{eig:s_nonhom} 
is uniquely determined by its values at two points.
\end{proposition}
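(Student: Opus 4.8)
The plan is to verify directly that $\Dl f_*=h$, and then to read off the general solution and the two uniqueness statements from Proposition~\ref{pro:sol_hom} and Remark~\ref{rem:unique}. Since $\Dl$ is right-linear and $h=\sum_{n=0}^d x^nc_n$, it suffices to prove the single identity $\Dl f_n=x^nc_n$ for each $n$. I would first isolate the slice-constant factor: writing $P_n(x):=\bigl(\sum_{k=0}^n \tfrac{x^k\lambda^k}{k!}\bigr)n!\lambda^{-n-1}=\sum_{k=0}^n x^k\tfrac{n!}{k!}\lambda^{k-n-1}$, we have $f_n=(-c_n)\cdot P_n$, and since $-c_n$ is (slice-)constant, the identity $\Dl(g\cdot f)=g\cdot\Dl f$ for $g\in\SC(\OO)$ recorded just before Remark~\ref{rem:unique} gives $\Dl f_n=(-c_n)\cdot\Dl P_n$.

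The heart of the matter is the telescoping computation of $\Dl P_n=\dd{P_n}{x}-P_n\lambda$. Differentiating term by term yields $\dd{P_n}{x}=\sum_{j=0}^{n-1}x^j\tfrac{n!}{j!}\lambda^{j-n}$, while $P_n\lambda=\sum_{k=0}^n x^k\tfrac{n!}{k!}\lambda^{k-n}$; all terms with exponent $0\le k\le n-1$ cancel in pairs, leaving only the top term $\Dl P_n=-x^n$. Substituting back gives $\Dl f_n=(-c_n)\cdot(-x^n)=c_n\cdot x^n$.

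The one genuinely delicate point --- and the step I expect to be the main obstacle --- is the identification of the slice product $c_n\cdot x^n$ with the monomial $x^nc_n$. Because $c_n$ need not be slice-preserving, $c_n\cdot x^n$ is a priori not a pointwise product, and the noncommutativity of the slice product must be handled with care. The resolution is that $x^n=\I(z^n)$ is slice-preserving, its stem function $z^n$ having real components; hence $z^n$ commutes with the constant $c_n$ inside $\hh\otimes\C$, so the two stem functions satisfy $c_nz^n=z^nc_n$ and therefore $c_n\cdot x^n=x^n\cdot c_n$. Now the left factor $x^n$ is slice-preserving, so $x^n\cdot c_n$ equals the pointwise product $x^nc_n$. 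Summing over $n$ then gives $\Dl f_*=\sum_{n=0}^d x^nc_n=h$.

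For the general solution, linearity of \eqref{eig:s_nonhom} shows that any two solutions in $\SR(\OO)$ differ by an element of $\Ker(\Dl)\cap\SR(\OO)$, which by Proposition~\ref{pro:sol_hom} equals $\{g\cdot\exp_\lambda(x)\mid g\in\SC(\OO)\}$; hence $f=f_*+g\cdot\exp_\lambda(x)$ with $g\in\SC(\OO)$. On the slice domain $\OO=\hh$ one has $\SC(\hh)=\hh$, and evaluating at a real point $y$ determines the constant: since the inducing stem function has vanishing imaginary component at real points, the slice product reduces there to $(c\cdot\exp_\lambda)(y)=c\,\exp_\lambda(y)$ with $\exp_\lambda(y)=e^{y\lambda}$, whence $c=(f(y)-f_*(y))\exp_{-\lambda}(y)$. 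On the product domain $\hh\setminus\rr$, a slice-constant $g$ is determined by its values on two half-slices and hence by two point evaluations, exactly as in Remark~\ref{rem:unique}, giving the stated uniqueness.
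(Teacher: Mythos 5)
Your proof is correct and follows essentially the same route as the paper: factor out the constant $c_n$ via $\Dl(g\cdot f)=g\cdot\Dl f$, perform the telescoping cancellation to get $\Dl f_n=c_n\cdot x^n=x^nc_n$, and then invoke Proposition~\ref{pro:sol_hom} and Remark~\ref{rem:unique} for the general solution and the uniqueness statements. The only difference is that you spell out the step $c_n\cdot x^n=x^nc_n$ (via slice-preservation of $x^n$) and the evaluation of the slice product at real points, which the paper leaves implicit.
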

\begin{proof}
We have 
\begin{align*}
\Dl f_n(x)&=-c_n\cdot \dd{}x\left(\sum_{k=0}^n \frac{x^k\lambda^k}{k!}\right)n!\lambda^{-n-1}+c_n\cdot \left(\sum_{k=0}^n \frac{x^k\lambda^k}{k!}\right)n!\lambda^{-n}=\\
&=-c_n\cdot \left(\sum_{k=1}^n \frac{x^{k-1}\lambda^{k-1}}{(k-1)!}\right)n!\lambda^{-n}+c_n\cdot \left(\sum_{k=0}^n \frac{x^k\lambda^k}{k!}\right)n!\lambda^{-n}=c_n\cdot x^n=x^nc_n.
\end{align*}
Therefore, $\Dl f=\Dl f_*=h$, if $f=f_*+g\cdot \exp_\lambda(x)$, with $g\in\SC(\OO)$. 
The last statement follows immediately from Proposition \ref{pro:sol_hom} and from Remark \ref{rem:unique}.
\end{proof}

The preceding Proposition means that one can define a right inverse of $\Dl$ on the space of quaternionic polynomials of degree at most $d$. It is the operator $\Sl:\hh_d[x]\to\hh_d[x]$ defined as follows: if $p(x)=\sum_{n=0}^dx^na_n$, then
\begin{equation}\label{def:Sl}
\Sl p=-\sum_{n=0}^d a_n\cdot \left(\sum_{k=0}^n\frac{x^k\lambda^k}{k!}\right)n!\lambda^{-n-1}=\sum_{k=0}^dx^kb_k,
\end{equation}
with
\[
b_k=-\sum_{n=k}^d\frac{a_n\lambda^{k-n-1}n!}{k!}\text{\quad for $k=0,\ldots,d$}.
\]

Now we consider equation \eqref{eig:s_nonhom} with an exponential right-hand side $h(x)=g\cdot \exp_\mu(x)$, with $\mu\in\hh$ and $g$ slice-constant. 

\begin{proposition}\label{pro:lm}
Let $\lambda,\mu\in\hh$, with $0<|\mu|<|\lambda|$. For every $n\in\nn$, let $f_n^{\mu,\lambda}\in\hh[x]$ be the slice-regular polynomial of degree $n$
\[f_n^{\mu,\lambda}(x):=-\sum_{k=0}^n\frac{x^k\mu^n\lambda^k}{k!}\lambda^{-n-1}.
\]
Then the series of functions $\sum_{n=0}^{+\infty} f_n^{\mu,\lambda}(x)$ converges uniformly on the compact sets of $\hh$ to an entire slice-regular function $f^{\mu,\lambda}\in\SR(\hh)$ such that $\Dl (f^{\mu,\lambda})=\exp_\mu(x)$. As a consequence,
$\Dl(g\cdot f^{\mu,\lambda})=g\cdot \exp_\mu(x)$ for every slice-constant $g$ on $\OO$.
If $\lambda$ and $\mu$ commute, then the function $f^{\mu,\lambda}$ has the expected explicit form $f^{\mu,\lambda}(x)=\exp_\mu(x)(\mu-\lambda)^{-1}$.
\end{proposition}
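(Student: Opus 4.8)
The plan is to realize $f^{\mu,\lambda}$ as the result of applying, term by term, a right inverse of $\Dl$ to the power series $\exp_\mu(x)=\sum_{n\ge0}\frac{x^n\mu^n}{n!}$, exactly mirroring the polynomial construction of Proposition \ref{pro:polynomial}. The single algebraic ingredient I would establish first is the identity
\[
\Dl f_n^{\mu,\lambda}(x)=\frac{x^n\mu^n}{n!}\qquad(n\in\nn).
\]
Since $f_n^{\mu,\lambda}(x)=-\sum_{k=0}^n\frac{x^k\mu^n\lambda^{k-n-1}}{k!}$ is a slice-regular polynomial whose slice derivative is computed by formal differentiation in $x$, the computation is the verbatim analogue of the one in the proof of Proposition \ref{pro:polynomial}: differentiating shifts the summation index, and after subtracting $f_n^{\mu,\lambda}\lambda$ all terms cancel except the top one $k=n$, leaving $\frac{x^n\mu^n}{n!}$. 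The only point requiring care is the non-commutative ordering of $x^k$, $\mu^n$ and the powers of $\lambda$; here one uses that powers of a single quaternion commute, so $\lambda^k\lambda^{-n-1}=\lambda^{k-n-1}$.

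The crux of the proof is the convergence, and this is precisely where the hypothesis $0<|\mu|<|\lambda|$ is indispensable; I expect this to be the main obstacle, since everything else is essentially formal. Using the multiplicativity of the quaternionic norm, for $|x|\le R$ I would bound
\[
\bigl|f_n^{\mu,\lambda}(x)\bigr|\le\sum_{k=0}^n\frac{R^k|\mu|^n|\lambda|^{k-n-1}}{k!}\le\frac{e^{R|\lambda|}}{|\lambda|}\left(\frac{|\mu|}{|\lambda|}\right)^n,
\]
together with the analogous bound $e^{R|\lambda|}\bigl(|\mu|/|\lambda|\bigr)^n$ for the slice derivative $\frac{\partial}{\partial x}f_n^{\mu,\lambda}$. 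Because $|\mu|/|\lambda|<1$, both majorants are geometric series, so the Weierstrass $M$-test yields uniform convergence of $\sum_n f_n^{\mu,\lambda}$ and of $\sum_n\frac{\partial}{\partial x}f_n^{\mu,\lambda}$ on every compact subset of $\hh$. Consequently the limit $f^{\mu,\lambda}$ is a locally uniform limit of slice-regular polynomials, hence itself entire slice-regular, and the slice derivative may be taken term by term.

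With this in hand, writing $\Dl=\frac{\partial}{\partial x}-R_\lambda$ with $R_\lambda$ continuous, I would conclude
\[
\Dl f^{\mu,\lambda}=\sum_{n=0}^{+\infty}\Dl f_n^{\mu,\lambda}=\sum_{n=0}^{+\infty}\frac{x^n\mu^n}{n!}=\exp_\mu(x).
\]
The statement for $g\cdot f^{\mu,\lambda}$ is then immediate from the identity $\Dl(g\cdot f)=g\cdot\Dl f$ for slice-constant $g$, recorded just before Remark \ref{rem:unique}. For the commuting case $\lambda\mu=\mu\lambda$, I would observe that the estimate above also shows the double series $\sum_n\sum_k$ to be absolutely convergent (its majorant sums to $\frac{e^{|x||\mu|}}{|\lambda|-|\mu|}$), so it may be rearranged as a power series $\sum_k x^k b_k$ with $b_k=-\frac{1}{k!}\sum_{n\ge k}\mu^n\lambda^{k-n-1}$. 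Evaluating the inner geometric sum in the commuting variable $\mu\lambda^{-1}$ gives $b_k=\frac{\mu^k}{k!}(\mu-\lambda)^{-1}$, whence $f^{\mu,\lambda}(x)=\exp_\mu(x)(\mu-\lambda)^{-1}$; here $\mu-\lambda$ is invertible because $|\mu|<|\lambda|$ forces $\mu\ne\lambda$. Equivalently, this closed form can be verified directly, since once $\mu$, $\lambda$ and $(\mu-\lambda)^{-1}$ mutually commute one has
\[
\Dl\bigl(\exp_\mu(x)(\mu-\lambda)^{-1}\bigr)=\exp_\mu(x)\bigl((\mu-\lambda)^{-1}\mu-(\mu-\lambda)^{-1}\lambda\bigr)=\exp_\mu(x).
\]
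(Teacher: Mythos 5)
Your proposal is correct and follows essentially the same route as the paper's proof: the same term-by-term identity $\Dl f_n^{\mu,\lambda}=\frac{x^n\mu^n}{n!}$ modeled on Proposition \ref{pro:polynomial}, the same majorant $\frac{e^{|x\lambda|}}{|\lambda|}\bigl(|\mu|/|\lambda|\bigr)^n$ giving a geometric series, and the same rearrangement into $\sum_k x^k\frac{\mu^k}{k!}(\mu-\lambda)^{-1}$ in the commuting case. Your extra touches (bounding the differentiated series to justify term-by-term differentiation, and invoking absolute convergence to justify the rearrangement) only make explicit what the paper leaves implicit.
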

\begin{proof}
Since $f_n^{\mu,\lambda}(x):=-\mu^n\cdot\sum_{k=0}^n\frac{x^k\lambda^k}{k!}\lambda^{-n-1}$, as in the proof of Proposition \ref{pro:polynomial} we get that
\[\Dl(f_n^{\mu,\lambda})=x^n\frac{\mu^n}{n!}.
\]
From the estimate
\[|f_n^{\mu,\lambda}(x)|\le \sum_{k=0}^n \frac {|x|^k |\mu|^n|\lambda|^k}{k!}|\lambda|^{-n-1}=\sum_{k=0}^n \frac {|x\lambda|^k}{k!}\frac{|\mu|^n}{|\lambda|^n}|\lambda|^{-1}\le \frac{e^{|x\lambda|}}{|\lambda|} \frac{|\mu|^n}{|\lambda|^n}
\]
follows the uniform convergence of the series on compacts and then the equality $\Dl f^{\mu,\lambda}(x)=\sum_{n=0}^{+\infty} x^n\mu^n/n!=\exp_\mu(x)$.
To prove the last statement, we can assume that $\lambda,\mu\in\cc_I$ for a unit $I\in\s$ and write
\[f^{\mu,\lambda}(x)=\sum_{n=0}^{+\infty} f_n^{\mu,\lambda}(x)=\sum_{k=0}^{+\infty} \left(-\sum_{n=k}^{+\infty} \frac{x^k\mu^n}{k!}\lambda^{-n-1+k} \right).
\]
We then compute the sums 
\[g_k^{\mu,\lambda}(x):=-\sum_{n=k}^{+\infty} \frac{x^k\mu^n}{k!}\lambda^{-n-1+k}=
-\frac{x^k}{k!}\lambda^{-1+k}\sum_{n=k}^{+\infty} (\mu\lambda^{-1})^n=\frac{x^k}{k!}\mu^k(\mu-\lambda)^{-1}\]
and conclude observing that
\[f^{\mu,\lambda}(x)=\sum_{k=0}^{+\infty} g_k^{\mu,\lambda}(x)=\sum_{k=0}^{+\infty} \frac{x^k}{k!}\mu^k(\mu-\lambda)^{-1}=\exp_\mu(x)(\mu-\lambda)^{-1}.
\]
\end{proof}

If $\lambda$ and $\mu$ commute and $\lambda\ne\mu$, then $\exp_\mu(x)(\mu-\lambda)^{-1}$ solves $\Dl f=\exp_\mu(x)$ even if $|\mu|\ge|\lambda|$ or if one of the parameters $\lambda,\mu$ vanishes.

If $\lambda=\mu$, then a solution of the equation $\Dl f=\exp_\lambda(x)$ is easily found from the correspondent complex problem. The one-slice-preserving slice-regular function $f(x)= x\exp_\lambda(x)$ satisfies the equation $\Dl f=\exp_\lambda(x)$. 

In the next Proposition we show how to remove the assumption $0<|\mu|<|\lambda|$ of Proposition \ref{pro:lm} and find another slice-regular solution of the equation $\Dl f=\exp_\mu(x)$. More precisely, we find the power series expansion around the origin of the unique solution vanishing at $0$.



\begin{proposition}\label{pro:lmII}
Let $\lambda,\mu\in\hh$. 
Let $g^{\mu,\lambda}$ be the sum of the slice-regular power series
\[g^{\mu,\lambda}(x):=\sum_{n=1}^{+\infty}\frac{x^n}{n!}\sum_{k=0}^{n-1}\mu^k\lambda^{n-k-1}.
\]
The series converges uniformly on the compact sets of $\hh$ to an entire slice-regular function such that $\Dl (g^{\mu,\lambda})=\exp_\mu(x)$ and $g^{\mu,\lambda}(0)=0$. As a consequence, 
$\Dl(h\cdot g^{\mu,\lambda})=h\cdot \exp_\mu(x)$ for every slice-constant $h$ on $\OO$.
If $\lambda$ and $\mu$ commute and $\lambda\ne\mu$, then the function $g^{\mu,\lambda}$ has the expected explicit form $g^{\mu,\lambda}(x)=(\exp_\mu(x)-\exp_\lambda(x))(\mu-\lambda)^{-1}$.
If $\lambda=\mu$, then $g^{\lambda,\lambda}(x)=x\exp_\lambda(x)$.
\end{proposition}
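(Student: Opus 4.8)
The plan is to work directly with the coefficients of the given power series and to exploit a single noncommutative recurrence. Write $S_n:=\sum_{k=0}^{n-1}\mu^k\lambda^{n-k-1}$, so that $g^{\mu,\lambda}(x)=\sum_{n=1}^{+\infty}\frac{x^n}{n!}S_n$. First I would settle convergence: setting $M:=\max(|\mu|,|\lambda|)$ one has $|S_n|\le nM^{n-1}$, hence $\left|\frac{x^n}{n!}S_n\right|\le\frac{M^{n-1}|x|^n}{(n-1)!}$, and the series $\sum_{n\ge1}\frac{M^{n-1}|x|^n}{(n-1)!}=|x|e^{M|x|}$ dominates it term by term. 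This gives uniform convergence on every compact subset of $\hh$, so the sum is an entire slice-regular function; since the summation starts at $n=1$, clearly $g^{\mu,\lambda}(0)=0$. The differentiated series $\sum_{n\ge1}\frac{x^{n-1}}{(n-1)!}S_n$ admits the same kind of estimate, so term-by-term slice differentiation is legitimate.

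The core identity is the recurrence obtained by peeling off the top term of $S_{n+1}$: since $S_{n+1}=\sum_{k=0}^{n}\mu^k\lambda^{n-k}=\mu^n+\Big(\sum_{k=0}^{n-1}\mu^k\lambda^{n-1-k}\Big)\lambda=\mu^n+S_n\lambda$, with $\lambda$ appearing on the \emph{right} --- exactly matching the right multiplication $R_\lambda$ inside $\Dl$. Differentiating term by term gives $\frac{\partial g^{\mu,\lambda}}{\partial x}=\sum_{m=0}^{+\infty}\frac{x^m}{m!}S_{m+1}$, while $g^{\mu,\lambda}(x)\lambda=\sum_{m=1}^{+\infty}\frac{x^m}{m!}S_m\lambda$. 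Subtracting, the $m=0$ term contributes $S_1=1$, and for $m\ge1$ the coefficient of $\frac{x^m}{m!}$ is $S_{m+1}-S_m\lambda=\mu^m$. Hence $\Dl g^{\mu,\lambda}=\sum_{m=0}^{+\infty}\frac{x^m\mu^m}{m!}=\exp_\mu(x)$, as desired. The consequence $\Dl(h\cdot g^{\mu,\lambda})=h\cdot\exp_\mu(x)$ for slice-constant $h$ is then immediate from the identity $\Dl(h\cdot f)=h\cdot\Dl f$ recorded before Remark \ref{rem:unique}.

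For the two explicit formulas I would specialize the coefficients $S_n$. When $\lambda$ and $\mu$ commute and $\lambda\ne\mu$, the difference $\mu-\lambda$ is a nonzero quaternion, hence invertible, and the commuting geometric-type sum collapses to $S_n=(\mu^n-\lambda^n)(\mu-\lambda)^{-1}$; resumming and using $\sum_{n\ge1}\frac{x^n\mu^n}{n!}=\exp_\mu(x)-1$ (and likewise for $\lambda$) yields $g^{\mu,\lambda}(x)=(\exp_\mu(x)-\exp_\lambda(x))(\mu-\lambda)^{-1}$. When $\lambda=\mu$ every summand of $S_n$ equals $\mu^{n-1}$, so $S_n=n\mu^{n-1}$ and $g^{\mu,\mu}(x)=\sum_{n\ge1}\frac{x^n\mu^{n-1}}{(n-1)!}=\sum_{m\ge0}\frac{x^{m+1}\mu^m}{m!}=x\exp_\mu(x)$, the reindexing being valid because $x^{m+1}\mu^m=x(x^m\mu^m)$.

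The main obstacle, and the only place where noncommutativity really bites, is keeping the factors in the correct order throughout: the recurrence must be written with $\lambda$ on the right so that it pairs with $R_\lambda$, and in the $\lambda=\mu$ reindexing one must not accidentally commute $x$ past $\mu$. Once the recurrence $S_{n+1}=\mu^n+S_n\lambda$ is in hand, everything else is routine bookkeeping.
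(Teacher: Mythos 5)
Your proof is correct and takes essentially the same route as the paper's: term-by-term differentiation of the series together with the coefficient identity $S_{m+1}-S_m\lambda=\mu^m$ (the paper carries out the same cancellation by reindexing the two series rather than phrasing it as a recurrence), followed by identical specializations via the geometric sum in the commuting case and $S_n=n\lambda^{n-1}$ when $\lambda=\mu$. The only noteworthy difference is your convergence estimate $|S_n|\le nM^{n-1}$ with $M=\max(|\mu|,|\lambda|)$, which treats all cases uniformly and is actually tidier than the paper's separate estimates for $|\lambda|\ne|\mu|$ and $|\lambda|=|\mu|$.
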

\begin{proof}
Assume for the moment that the series is uniformly convergent. Then
\begin{align*}
\Dl g^{\mu,\lambda}&=\dd{}{x}\left(\sum_{n=1}^{+\infty}\frac{x^n}{n!}\sum_{k=0}^{n-1}\mu^k\lambda^{n-k-1}\right)-\left(\sum_{n=1}^{+\infty}\frac{x^n}{n!}\sum_{k=0}^{n-1}\mu^k\lambda^{n-k-1}\right)\lambda\\
&=\sum_{n=1}^{+\infty}\frac{nx^{n-1}}{n!}\sum_{k=0}^{n-1}\mu^k\lambda^{n-k-1}-\sum_{n=1}^{+\infty}\frac{x^n}{n!}\sum_{k=0}^{n-1}\mu^k\lambda^{n-k}\\
&=\sum_{n=0}^{+\infty}\frac{x^{n}}{n!}\sum_{k=0}^{n}\mu^k\lambda^{n-k}-\sum_{n=1}^{+\infty}\frac{x^n}{n!}\sum_{k=0}^{n-1}\mu^k\lambda^{n-k}\\
&=1+\sum_{n=1}^{+\infty}\frac{x^n}{n!}\mu^n=\exp_\mu(x).
\end{align*}
Let \[g_n(x):=\frac{x^n}{n!}\sum_{k=0}^{n-1}\mu^k\lambda^{n-k-1}.
\]
If $|\lambda|\ne|\mu|$, then
\[
|g_n(x)|\le\frac{|x|^n}{n!}\sum_{k=0}^{n-1}|\mu|^k|\lambda|^{n-k-1}=
\frac{|x|^n}{n!}\frac{|\lambda|^n-|\mu|^n}{|\lambda|-|\mu|}.
\]
Since the estimate is symmetric in $\lambda$ and $\mu$, we can assume that $|\mu|<|\lambda|$. Then 
\[|g_n(x)|\le\frac{|2x\lambda|^n}{n!}\frac{2^{-n}}{|\lambda|-|\mu|}\le e^{|2x\lambda|}\frac{2^{-n}}{|\lambda|-|\mu|}
\]
from which follows the uniform convergence of the series $g^{\mu,\lambda}(x)=\sum_{n\ge1}g_n(x)$ on the compacts sets of $\hh$.  
If $|\lambda|=|\mu|$, then
\[
|g_n(x)|\le\frac{|x|^n}{n!}\sum_{k=0}^{n-1}|\lambda|^{n-1}=
\frac{|x|^n}{(n-1)!}|\lambda|^{n-1}= \frac{|x||2x\lambda|^{n-1}}{(n-1)!}2^{-n+1}
\le |x|e^{|2x\lambda|}2^{-n+1}
\]
and the uniform convergence of the series follows also in this case. If $\lambda$ and $\mu$ commute, the following equality holds:
\[
g^{\mu,\lambda}(x)=\sum_{n=1}^{+\infty}\frac{x^n}{n!}\sum_{k=0}^{n-1}\mu^k\lambda^{n-k-1}=\sum_{n=1}^{+\infty}\frac{x^n}{n!}(\mu^n-\lambda^n){(\mu-\lambda)^{-1}}.
\]
The last statement follows immediately from the series expansion
\[
g^{\lambda,\lambda}(x)=\sum_{n=1}^{+\infty}\frac{x^n}{n!}n\lambda^{n-1}=x\sum_{n=0}^{+\infty}\frac{x^n\lambda^{n}}{n!}=x\exp_\lambda(x).
\]
\end{proof}

\begin{remark}
In view of Proposition \ref{pro:sol_hom}, the solutions $f^{\mu,\lambda}$ and $g^{\mu,\lambda}$ of Propositions \ref{pro:lm} and \ref{pro:lmII} of the equation $\Dl f=\exp_\mu(x)$, as soon as they are both defined, just differ by a function of the form $h\cdot \exp_\lambda(x)$, with $h\in\SC(\OO)$. Indeed, it holds $f^{\mu,\lambda}-g^{\mu,\lambda}=f^{\mu,\lambda}(0)\cdot \exp_\lambda(x)$. 
\end{remark}

\begin{examples}\label{ex1}
(1)\quad Let $\mu=i$, $\lambda=j$. A direct computation shows that the solution $g^{i,j}$ of the equation $\mathcal D_jf=\exp_i(x)=\cos x+(\sin x) i$ given by Proposition \ref{pro:lmII} has the explicit expression
\[
g^{i,j}(x)=\sum_{n=1}^{+\infty}\frac{x^n}{n!}\sum_{k=0}^{n-1}i^k j^{n-k-1}=\frac12\left(\sin x+x\cos x+(x\sin x)(i+j)+(\sin x-x\cos x)k\right).
\]
Observe that the slice-regular functions $\cos x=\I(\cos z)$ and $\sin x=\I(\sin z)$ are exactly the functions defined in \cite[Def.11.23]{GHS} in the more general context of Clifford algebras.

(2)\quad Let $\mu=i$, $\lambda=2j$. Then the solution $g^{i,2j}$ of the equation $\mathcal D_{2j}f=\exp_i(x)$ is
\[
g^{i,2j}(x)=\frac13\left(2\sin(2x)-\sin x+(\cos x-\cos(2x))(i+2j)+(2\sin x-\sin(2x))k\right).
\]

(3)\quad Let $\mu=2i$, $\lambda=j$. Then the solution $g^{2i,j}$ of the equation $\mathcal D_{j}f=\exp_{2i}(x)$ is the function
\[
g^{2i,j}(x)=\frac13\left(2\sin(2x)-\sin x+(\cos x-\cos(2x))(2i+j)+(2\sin x-\sin(2x))k\right).
\]
\end{examples}

\subsection*{Eigenvalue problem of the second order for slice-regular functions}
Propositions \ref{pro:sol_hom} and \ref{pro:lmII} permit us to study eigenvalue problems of the second order. Given $\lambda,\mu\in\hh$, we consider the equation
\begin{equation}\label{eq:eig2}
\begin{cases}\displaystyle
\Dm\Dl f=0\text{\quad on $\OO$},\\
\text{with }f\in\SR(\OO).
\end{cases}
\end{equation}
Notice that two operators $\Dl$ and $\Dm$ commute if and only if $\lambda$ and $\mu$ commute, i.e., they belong to the same complex slice $\cc_I$, $I\in\s$.

\begin{corollary}\label{cor:order2}
Let $\lambda,\mu\in\hh$. 
\begin{enumerate}
	\item
If $\lambda\ne\mu$, then the function $g^{\mu,\lambda}$ of Proposition \ref{pro:lmII} is a solution of \eqref{eq:eig2}. Any solution of \eqref{eq:eig2} is of the form $f=h_1\cdot g^{\mu,\lambda}+h_2\cdot\exp_\lambda(x)$, with $h_1,h_2\in\SC(\OO)$. 
\item
If  $\lambda$ and $\mu$ are distinct and commute, then the solutions of \eqref{eq:eig2} are the functions of the form $f=h_1\cdot \exp_\mu(x)+h_2\cdot\exp_\lambda(x)$, with $h_1,h_2\in\SC(\OO)$. 
\item
If $\lambda=\mu$, then the solutions of \eqref{eq:eig2}, i.e., the functions  $f\in\SR(\OO)$ such that $\Dl^2 f=0$, are the functions of the form $f=h_1\cdot \exp_\lambda(x)+h_2\cdot(x\exp_\lambda(x))$, with $h_1,h_2\in\SC(\OO)$.
\end{enumerate}
\end{corollary}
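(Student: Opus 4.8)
The plan is to handle all three cases through a single ``reduction of order'' scheme and then specialize, using two structural facts already in hand: the identity $\Dl(g\cdot f)=g\cdot\Dl f$ (and its analogue for $\Dm$) for every slice-constant $g$, and the kernel description $\Ker(\Dl)\cap\SR(\OO)=\{g\cdot\exp_\lambda(x):g\in\SC(\OO)\}$ from Proposition \ref{pro:sol_hom}. First I would dispatch the easy inclusion, that the displayed functions really solve \eqref{eq:eig2}. For the general shape $f=h_1\cdot g^{\mu,\lambda}+h_2\cdot\exp_\lambda(x)$ with $h_1,h_2\in\SC(\OO)$, applying $\Dl$ and using $\Dl(g^{\mu,\lambda})=\exp_\mu(x)$ from Proposition \ref{pro:lmII} together with $\Dl(\exp_\lambda(x))=0$ gives $\Dl f=h_1\cdot\exp_\mu(x)$; applying $\Dm$ and using $\Dm(\exp_\mu(x))=0$ then yields $\Dm\Dl f=0$. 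This computation never invokes commutativity of $\lambda$ and $\mu$, so it covers case (1) at once and, taking $\mu=\lambda$ with $g^{\lambda,\lambda}(x)=x\exp_\lambda(x)$, also case (3).

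For the converse -- the assertion that every solution has the stated form -- I would argue by reducing the order. Given $f\in\SR(\OO)$ with $\Dm\Dl f=0$, set $k:=\Dl f$. Since $\Dl$ sends slice-regular functions to slice-regular functions, $k\in\SR(\OO)$, and by hypothesis $\Dm k=0$; hence Proposition \ref{pro:sol_hom} gives $k=h_1\cdot\exp_\mu(x)$ for some $h_1\in\SC(\OO)$. Proposition \ref{pro:lmII} supplies the particular solution $h_1\cdot g^{\mu,\lambda}$ of the first-order equation $\Dl f=k$, because $\Dl(h_1\cdot g^{\mu,\lambda})=h_1\cdot\exp_\mu(x)=k$. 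Therefore $f-h_1\cdot g^{\mu,\lambda}\in\Ker(\Dl)\cap\SR(\OO)$, which by Proposition \ref{pro:sol_hom} equals $h_2\cdot\exp_\lambda(x)$ for some $h_2\in\SC(\OO)$. This produces $f=h_1\cdot g^{\mu,\lambda}+h_2\cdot\exp_\lambda(x)$, establishing case (1) when $\lambda\ne\mu$, and case (3) when $\mu=\lambda$ after substituting $g^{\lambda,\lambda}(x)=x\exp_\lambda(x)$.

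It then remains to rewrite the general form in case (2), where $\lambda,\mu$ are distinct and commute, say $\lambda,\mu\in\cc_I$ for some $I\in\s$. Here I would substitute the explicit expression $g^{\mu,\lambda}(x)=(\exp_\mu(x)-\exp_\lambda(x))(\mu-\lambda)^{-1}$ from Proposition \ref{pro:lmII} into $f=h_1\cdot g^{\mu,\lambda}+h_2\cdot\exp_\lambda(x)$ and collect terms. The step I expect to be the main obstacle is the bookkeeping of the right-hand factor $c:=(\mu-\lambda)^{-1}$: I must turn a term such as $h_1\cdot\exp_\mu(x)\cdot c$ into one of the admissible form $\tilde h_1\cdot\exp_\mu(x)$. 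The key observation is that $c\in\cc_I$ commutes with $\exp_\mu(x)$ and $\exp_\lambda(x)$ on the slice $\cc_I$ (on which these one-slice-preserving functions take values in $\cc_I$), so by the identity principle for slice-regular functions the commutation $\exp_\mu(x)\cdot c=c\cdot\exp_\mu(x)$ holds on all of $\OO$, where a constant is read as a slice-constant function and slice product by a constant coincides with pointwise multiplication. Using associativity of the slice product and the fact that $\SC(\OO)$ is closed under slice product, I can then move $c$ to the left, absorb it into the coefficient, and recollect to obtain $f=h_1'\cdot\exp_\mu(x)+h_2'\cdot\exp_\lambda(x)$ with $h_1',h_2'\in\SC(\OO)$. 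The reverse passage, writing $\exp_\mu(x)=g^{\mu,\lambda}(x)(\mu-\lambda)+\exp_\lambda(x)$ and moving $(\mu-\lambda)$ across in the same fashion, shows the two module descriptions coincide, completing case (2). A convenient shortcut worth noting is that, because $c$ commutes with both $\lambda$ and $\mu$, right multiplication by $c$ satisfies $\Dl(fc)=(\Dl f)c$ and $\Dm(fc)=(\Dm f)c$, hence preserves the solution set, which streamlines the same conversion.
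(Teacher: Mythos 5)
Your proposal is correct and follows essentially the same route as the paper: the forward inclusion via $\Dm\Dl\bigl(h_1\cdot g^{\mu,\lambda}+h_2\cdot\exp_\lambda(x)\bigr)=h_1\cdot\Dm\exp_\mu(x)=0$, the converse by reduction of order applying Proposition \ref{pro:sol_hom} twice together with Proposition \ref{pro:lmII}, case (3) as the specialization $\mu=\lambda$ with $g^{\lambda,\lambda}(x)=x\exp_\lambda(x)$, and case (2) by substituting the explicit commutative form of $g^{\mu,\lambda}$ and absorbing the constant $(\mu-\lambda)^{-1}$ into the slice-constant coefficients. The only immaterial divergence is the justification of $\exp_\mu(x)\cdot(\mu-\lambda)^{-1}=(\mu-\lambda)^{-1}\cdot\exp_\mu(x)$: the paper rearranges the power series, while you use the one-slice-preserving property on $\cc_I$ plus the representation formula, which is equally valid and is in fact the argument the paper itself employs elsewhere (e.g., to prove $\exp_\lambda(x)\cdot\exp_{-\lambda}(x)=1$).
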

\begin{proof}
From Proposition \ref{pro:lmII} and Proposition \ref{pro:sol_hom}, $\Dm\Dl(g^{\mu,\lambda})=\Dm\exp_\mu(x)=0$. Conversely, if $f$ is a solution of \eqref{eq:eig2}, then $\Dl f$ has the form $h_1\cdot\exp_\mu(x)$, with $h_1\in\SC(\OO)$, and then $f-h_1\cdot g^{\mu,\lambda}\in\Ker(\Dl)\cap\SR(\OO)$. From Proposition \ref{pro:sol_hom}, we get $f-h_1\cdot g^{\mu,\lambda}=h_2\cdot\exp_\lambda(x)$, with $h_2\in\SC(\OO)$. 

If $\lambda$ and $\mu$ are distinct but commuting elements, then Proposition \ref{pro:lmII} gives the solution $g^{\mu,\lambda}(x)=(\exp_\mu(x)-\exp_\lambda(x))(\mu-\lambda)^{-1}$ of $\Dl f=\exp_\mu(x)$.
Moreover, 
\[\exp_\mu(x)(\mu-\lambda)^{-1}=\sum_{k=0}^{+\infty}\frac{x^k\mu^k}{k!}(\mu-\lambda)^{-1}=\sum_{k=0}^{+\infty}\frac{x^k(\mu-\lambda)^{-1}\mu^k}{k!}=(\mu-\lambda)^{-1}\cdot\exp_\mu(x)
\]
and similarly for $\exp_\lambda(x)$. Then $h_1\cdot g^{\mu,\lambda}=(h_1(\mu-\lambda)^{-1})\cdot\exp_\mu(x)-(h_2(\mu-\lambda)^{-1})\cdot\exp_\lambda(x)=h'_1\cdot\exp_\mu(x)+h'_2\cdot\exp_\lambda(x)$, with $h'_1,h'_2\in\SC(\OO)$. 
The statement of the third case addressing $\lambda=\mu$ follows from Proposition \ref{pro:lmII} and Proposition \ref{pro:sol_hom}.
\end{proof}

\subsection*{Eigenvalue problem of the $m$th-order for slice-regular functions}
Now we generalize Corollary \ref{cor:order2} to eigenvalue problems of any order $m$. Let $\lambda_1,\ldots,\lambda_m\in\hh$ and consider the equation
\begin{equation}\label{eq:eig_m}
\begin{cases}\displaystyle
\DL{1}\cdots\DL{m} f=0\text{\quad on $\OO$},\\
\text{with }f\in\SR(\OO).
\end{cases}
\end{equation}

We begin by introducing a family of slice-regular functions which generalize the exponential functions $\exp_\lambda(x)$ of Remark \ref{rem:exp}.

\begin{definition}\label{def:E}
Let $\lambda_1,\ldots,\lambda_m\in\hh$, with $m\ge1$. The \emph{generalized exponential function} associated with $\Lambda=(\lambda_1,\ldots,\lambda_m)$ is the slice-regular entire function $E_\Lambda(x)$ defined by
\[
E_\Lambda(x):=\sum_{n=m-1}^{+\infty}\frac{x^{n}}{n!}\sum_{|K|=n-m+1}\lambda_1^{k_1}\cdots\lambda_m^{k_m},
\]
where the sum is extended over the multi-indices $K=(k_1,\ldots,k_m)$ of non-negative integers such that $|K|=k_1+\cdots+k_m=n-m+1$. In particular, it holds $E_{(\lambda)}=\exp_{\lambda}(x)$ for every $\lambda\in\hh$.
\end{definition}

\begin{proposition}\label{pro:E}
The series in the definition of $E_\Lambda(x)$ converges uniformly on the compacts of $\hh$, i.e., the entire slice-regular function $E_\Lambda$  in the previous definition is well-defined. Moreover, if $\Lambda=(\lambda_1,\ldots,\lambda_m)=(\lambda,\ldots,\lambda)$, then $E_{\Lambda}(x)=\frac{x^{m-1}}{(m-1)!}\exp_\lambda(x)$. It holds $E_\Lambda(0)=1$ if $\Lambda=(\lambda)$, otherwise $E_\Lambda(0)=0$.
\end{proposition}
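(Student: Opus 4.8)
The plan is to establish the three assertions in turn: uniform convergence on compacts via the Weierstrass $M$-test, the closed form in the confluent case $\lambda_1=\cdots=\lambda_m=\lambda$ by a single index shift, and the value at $0$ by inspection of where the series begins. First I would set $M:=\max_{1\le i\le m}|\lambda_i|$ and estimate the inner sum. The multi-indices $K=(k_1,\ldots,k_m)$ of non-negative integers with $|K|=n-m+1$ number exactly $\binom{(n-m+1)+(m-1)}{m-1}=\binom{n}{m-1}$, and each monomial obeys $|\lambda_1^{k_1}\cdots\lambda_m^{k_m}|\le M^{k_1+\cdots+k_m}=M^{n-m+1}$. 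Hence the $n$-th term of $E_\Lambda$ is bounded in modulus by $\frac{|x|^n}{n!}\binom{n}{m-1}M^{n-m+1}$.

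Next I would substitute $j=n-m+1$ and use $\frac1{n!}\binom{n}{m-1}=\frac{1}{(m-1)!\,(n-m+1)!}$, so that this bound becomes $\frac{|x|^{m-1}}{(m-1)!}\frac{(M|x|)^{j}}{j!}$; summing over $j\ge0$ gives $\frac{|x|^{m-1}}{(m-1)!}e^{M|x|}$. On any compact set $\{|x|\le R\}$ this is dominated by the constant $\frac{R^{m-1}}{(m-1)!}e^{MR}$, so the $M$-test yields uniform convergence there. Being a locally uniform limit of slice-regular polynomials, the sum is an entire slice-regular function, i.e.\ $E_\Lambda\in\SR(\hh)$ is well-defined.

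For the confluent case I would observe that the inner sum collapses: every monomial equals $\lambda^{n-m+1}$, so the inner sum is $\binom{n}{m-1}\lambda^{n-m+1}$, and the same reindexing $j=n-m+1$ produces
\[
E_\Lambda(x)=\sum_{j=0}^{+\infty}\frac{x^{j+m-1}}{(m-1)!\,j!}\lambda^{j}=\frac{x^{m-1}}{(m-1)!}\sum_{j=0}^{+\infty}\frac{x^{j}\lambda^{j}}{j!}=\frac{x^{m-1}}{(m-1)!}\exp_\lambda(x).
\]
Since $\frac{x^{m-1}}{(m-1)!}$ is slice-preserving, no distinction between slice and pointwise product arises here. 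Finally, the value $E_\Lambda(0)$ is read off directly: the series starts at $n=m-1$, so for $m\ge2$ every term carries a positive power of $x$ and $E_\Lambda(0)=0$, whereas for $m=1$ the summation begins at $n=0$ and the single empty-index monomial contributes $1$, giving $E_{(\lambda)}(0)=1$.

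The arguments are elementary and I do not anticipate a genuine obstacle; the only point demanding care is the combinatorial bookkeeping, namely counting the multi-indices correctly as $\binom{n}{m-1}$ and matching the shift $n\mapsto n-m+1$ so that the estimate telescopes into a single exponential factor times $|x|^{m-1}/(m-1)!$. Getting this counting right simultaneously powers both the convergence estimate and the closed-form evaluation.
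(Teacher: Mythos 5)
Your proof is correct and follows essentially the same route as the paper's: bound the inner sum by the multi-index count $\binom{n}{m-1}$ times $M^{n-m+1}$, use $\frac{1}{n!}\binom{n}{m-1}=\frac{1}{(m-1)!\,(n-m+1)!}$ to dominate the series termwise, and reindex by $j=n-m+1$ for the confluent case. The only difference is cosmetic: the paper inserts powers of $2$ so as to compare with a geometric series $2^{-n}$, whereas you sum the exponential-series bounds directly, which is if anything cleaner.
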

\begin{proof}
Let $m$ be fixed, $n\ge m-1$ and define 
\[e_n(x):=\frac{x^n}{n!}\sum_{|K|=n-m+1}\lambda_1^{k_1}\cdots\lambda_m^{k_m}.
\]
Let $a:=\max\{|\lambda_1|,\ldots,|\lambda_m|\}$. Then we have 
\begin{align*}
|e_n(x)|&\le\frac{|x|^n}{n!}\sum_{|K|=n-m+1}|\lambda_1|^{k_1}\cdots|\lambda_m|^{k_m}
\le\frac{|x|^n}{n!}\sum_{|K|=n-m+1}a^{n-m+1}\\
&\le\frac{|x|^n}{n!}\binom{n}{n-m+1}a^{n-m+1}
=\frac{|x|^na^{n-m+1}}{(n-m+1)!(m-1)!}\\
&=\frac{|2x|^{m-1}}{(m-1)!}\frac{|2ax|^{n-m+1}}{(n-m+1)!}2^{-n}
\le \frac{|2x|^{m-1}}{(m-1)!}e^{|2ax|}2^{-n},
\end{align*}
from which follows the uniform convergence of the series $E_\Lambda(x)=\sum_{n\ge m-1}e_n(x)$ on the compacts sets of $\hh$.

If $\lambda_1=\cdots=\lambda_m=\lambda$, then 
\begin{align*}
E_{\Lambda}(x)&=\sum_{n=m-1}^{+\infty}\frac{x^{n}}{n!}\sum_{|K|=n-m+1}\lambda^{n-m+1}=
\sum_{n=m-1}^{+\infty}\frac{x^{n}}{n!}\binom{n}{n-m+1}\lambda^{n-m+1}\\
&=x^{m-1}\sum_{n=m-1}^{+\infty}\frac{x^{n-m+1}}{(n-m+1)!(m-1)!}\lambda^{n-m+1}
=\frac{x^{m-1}}{(m-1)!}\exp_\lambda(x).
\end{align*}
\end{proof}

We set $g_1(x):=\exp_{\lambda_1}(x)$ and we define $g_2(x)$ as the solution $g^{\lambda_1,\lambda_2}(x)$ of the equation $\DL 2g_2=g_1$ obtained in Proposition \ref{pro:lmII}. Then $\DL 1\DL 2g_2=\DL 1g_1=0$, as in Corollary \ref{cor:order2}. 
Observe that $g_2(x)$ is a generalized exponential function, since $g_2(x)=E_{(\lambda_1,\lambda_2)}(x)$. 
We now complete the construction of a sequence of entire slice-regular functions $g_1,\ldots g_m$ such that $\DL {k+1}g_{\ell+1}=g_\ell$ for every $\ell=1,\ldots,m-1$. The last element of the sequence, the function $g_m$, is then a solution of equation \eqref{eq:eig_m}, with the additional property that $\DL{2}\cdots\DL{m} g_m=g_1\ne0$.

For every $\ell=2,\ldots,m$, we put 
\[
g_\ell(x):=E_{(\lambda_1,\ldots,\lambda_\ell)}(x)
=\sum_{n=\ell-1}^{+\infty}\frac{x^{n}}{n!}\sum_{|K|=n-\ell+1}\lambda_1^{k_1}\cdots\lambda_\ell^{k_\ell}.
\]

\begin{proposition}\label{pro:Dm}
It holds $\DL{\ell+1} g_{\ell+1}=g_{\ell}$ for every $\ell=1,\ldots,m-1$.
\end{proposition}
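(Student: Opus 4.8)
The plan is to prove the identity by a direct term-by-term manipulation of the defining power series, reducing everything to a single combinatorial identity on the multi-index coefficients. I abbreviate
\[
c_n^{(\ell)}:=\sum_{|K|=n-\ell+1}\lambda_1^{k_1}\cdots\lambda_\ell^{k_\ell},\qquad
c_n^{(\ell+1)}:=\sum_{|K|=n-\ell}\lambda_1^{k_1}\cdots\lambda_{\ell+1}^{k_{\ell+1}},
\]
the sums ranging over multi-indices of length $\ell$ and $\ell+1$ respectively, so that $g_\ell(x)=\sum_{n\ge \ell-1}\frac{x^n}{n!}c_n^{(\ell)}$ and $g_{\ell+1}(x)=\sum_{n\ge \ell}\frac{x^n}{n!}c_n^{(\ell+1)}$. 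Since, by Proposition \ref{pro:E}, both series converge uniformly on the compact sets of $\hh$, the slice derivative acts term by term, whence
\[
\frac{\partial g_{\ell+1}}{\partial x}=\sum_{n\ge\ell}\frac{x^{n-1}}{(n-1)!}c_n^{(\ell+1)}=\sum_{n\ge\ell-1}\frac{x^n}{n!}c_{n+1}^{(\ell+1)},
\]
while right multiplication by $\lambda_{\ell+1}$ gives $g_{\ell+1}\lambda_{\ell+1}=\sum_{n\ge\ell}\frac{x^n}{n!}\,c_n^{(\ell+1)}\lambda_{\ell+1}$.

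Subtracting, the coefficient of $\frac{x^n}{n!}$ in $\DL{\ell+1}g_{\ell+1}$ is $c_{n+1}^{(\ell+1)}-c_n^{(\ell+1)}\lambda_{\ell+1}$, so the whole statement reduces to the identity
\[
c_{n+1}^{(\ell+1)}-c_n^{(\ell+1)}\lambda_{\ell+1}=c_n^{(\ell)}\qquad\text{for all }n\ge\ell-1.
\]
This is the combinatorial core. First I would observe that $c_n^{(\ell+1)}\lambda_{\ell+1}=\sum_{|K|=n-\ell}\lambda_1^{k_1}\cdots\lambda_{\ell+1}^{k_{\ell+1}+1}$; reindexing by $k_{\ell+1}\mapsto k_{\ell+1}+1$ identifies this with precisely those terms of $c_{n+1}^{(\ell+1)}$ (length-$(\ell+1)$ multi-indices with $|K|=n+1-\ell$) whose last exponent is at least $1$. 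Hence the difference retains exactly the terms with $k_{\ell+1}=0$; setting $k_{\ell+1}=0$ makes $\lambda_{\ell+1}^{k_{\ell+1}}=1$ and leaves a sum over $(k_1,\ldots,k_\ell)$ with $k_1+\cdots+k_\ell=n+1-\ell$, which is the definition of $c_n^{(\ell)}$. Matching coefficients then yields $\DL{\ell+1}g_{\ell+1}=g_\ell$.

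The hard part here is not conceptual but purely bookkeeping: one must keep the two starting indices $\ell-1$ and $\ell$ aligned through the reindexing of the derivative, and check the boundary degree so that no term is miscounted. Concretely, at $n=\ell-1$ the derivative contributes $c_\ell^{(\ell+1)}=1$ (the empty multi-index, $|K|=0$) while the multiplication term is absent, and this matches $c_{\ell-1}^{(\ell)}=1$; for $n\ge\ell$ the splitting of multi-indices by the value of the last exponent reproduces the subtraction exactly. Once these edge cases are confirmed, the identity holds for every $n\ge\ell-1$ and the Proposition follows, noting in passing that it also re-establishes $E_{(\lambda_1,\ldots,\lambda_{\ell+1})}=g_{\ell+1}$ as a genuine iterated primitive in the chain $\DL{k+1}g_{\ell+1}=g_\ell$.
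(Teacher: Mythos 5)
Your proof is correct and follows essentially the same route as the paper's: term-by-term differentiation of the defining series, a shift of the summation index, and the observation that subtracting $g_{\ell+1}\lambda_{\ell+1}$ cancels exactly the multi-indices with $k_{\ell+1}\ge 1$, leaving the terms with $k_{\ell+1}=0$, i.e.\ the coefficients of $g_\ell$. The only difference is presentational: you isolate the coefficient identity $c_{n+1}^{(\ell+1)}-c_n^{(\ell+1)}\lambda_{\ell+1}=c_n^{(\ell)}$ and check the boundary degree $n=\ell-1$ explicitly, whereas the paper performs the same cancellation inline within one chain of equalities.
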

\begin{proof}
\begin{align*}
\DL{\ell+1} g_{\ell+1}&=\dd{}{x}\left(\sum_{n=\ell}^{+\infty}\frac{x^{n}}{n!}\sum_{|K|=n-\ell}\lambda_1^{k_1}\cdots\lambda_{\ell+1}^{k_{\ell+1}}\right)-\left(\sum_{n=\ell}^{+\infty}\frac{x^{n}}{n!}\sum_{|K|=n-\ell}\lambda_1^{k_1}\cdots\lambda_{\ell+1}^{k_{\ell+1}}\right)\lambda_{\ell+1}\\
&=\sum_{n=\ell}^{+\infty}\frac{nx^{n-1}}{n!}\sum_{|K|=n-\ell}\lambda_1^{k_1}\cdots\lambda_{\ell+1}^{k_{\ell+1}}-\sum_{n=\ell}^{+\infty}\frac{x^n}{n!}\sum_{|K|=n-\ell}\lambda_1^{k_1}\cdots\lambda_{\ell+1}^{k_{\ell+1}+1}\\
&=\sum_{n=\ell-1}^{+\infty}\frac{x^{n}}{n!}\sum_{|K|=n-\ell+1}\lambda_1^{k_1}\cdots\lambda_{\ell+1}^{k_{\ell+1}}-\sum_{n=\ell}^{+\infty}\frac{x^n}{n!}\sum_{|K|=n-\ell}\lambda_1^{k_1}\cdots\lambda_{\ell+1}^{k_{\ell+1}+1}\\
&=\frac{x^{\ell-1}}{(\ell-1)!}+\sum_{n=\ell}^{+\infty}\frac{x^{n}}{n!}\sum_{|K|=n-\ell+1}\lambda_1^{k_1}\cdots\lambda_{\ell}^{k_{\ell}}=g_\ell(x).
\end{align*}
\end{proof}

\begin{corollary}\label{cor:order_m}
Let $\lambda_1,\ldots,\lambda_m\in\hh$. 
Let $g_{\lambda_m}(x):=\exp_{\lambda_m}(x)=E_{(\lambda_m)}(x)$. 
For $m\ge2$ and $1\le i< m$, let us denote by $g_{\lambda_i,\ldots,\lambda_m}$ the solution of the equation $\DL{m}g_{\lambda_i,\ldots,\lambda_m}=g_{\lambda_i,\ldots,\lambda_{m-1}}$
given by Proposition \ref{pro:Dm}, i.e., $g_{\lambda_i,\ldots,\lambda_m}=E_{(\lambda_i,\ldots,\lambda_m)}$.
Then the generalized exponential function $g_m=E_{(\lambda_1,\ldots,\lambda_m)}$ is a solution of \eqref{eq:eig_m} and any solution of \eqref{eq:eig_m} on $\OO$ is of the form 
\begin{equation}\label{eq:ker}
f=\sum_{i=1}^mh_i\cdot E_{(\lambda_i,\ldots,\lambda_m)}
\end{equation}
with $h_i\in\SC(\OO)$ for every $i$.
\end{corollary}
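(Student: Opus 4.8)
The plan is to prove this by induction on $m$, using Proposition \ref{pro:Dm} to establish that $g_m = E_{(\lambda_1,\ldots,\lambda_m)}$ is a solution, and then using the factorization structure of the operator product together with Proposition \ref{pro:sol_hom} to characterize the full solution space. The case $m=1$ is exactly Proposition \ref{pro:sol_hom}, since \eqref{eq:eig_m} reduces to $\DL{1}f=0$ and its solutions are $h_1\cdot\exp_{\lambda_1}(x)=h_1\cdot E_{(\lambda_1)}(x)$ with $h_1\in\SC(\OO)$.

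First I would verify that $g_m$ solves \eqref{eq:eig_m}. By construction $\DL{m}g_m=g_{m-1}$, and iterating Proposition \ref{pro:Dm} gives $\DL{2}\cdots\DL{m}g_m=g_1=\exp_{\lambda_1}(x)$, so that $\DL{1}\DL{2}\cdots\DL{m}g_m=\DL{1}\exp_{\lambda_1}(x)=0$ by Remark \ref{rem:exp}. The same telescoping shows that each $E_{(\lambda_i,\ldots,\lambda_m)}$ is annihilated by $\DL{i}\cdots\DL{m}$, hence a fortiori by the full product $\DL{1}\cdots\DL{m}$; combined with the linearity of the operators and the identity $\DL{k}(h\cdot f)=h\cdot\DL{k}f$ for slice-constant $h$ (stated just after Proposition \ref{pro:sol_hom}), this shows that every function of the form \eqref{eq:ker} is a solution.

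The converse inclusion is the main step. Suppose $f\in\SR(\OO)$ satisfies $\DL{1}\cdots\DL{m}f=0$. Writing $\Lambda'=(\lambda_2,\ldots,\lambda_m)$ and $P=\DL{2}\cdots\DL{m}$, the function $u:=Pf\in\SR(\OO)$ satisfies $\DL{1}u=0$, so by Proposition \ref{pro:sol_hom} we have $u=h_1\cdot\exp_{\lambda_1}(x)=h_1\cdot g_1$ for some $h_1\in\SC(\OO)$. Now I would invoke the inductive hypothesis applied to the order-$(m-1)$ problem $Pf=h_1\cdot g_1$: by Proposition \ref{pro:Dm} the particular solution $h_1\cdot g_m$ satisfies $P(h_1\cdot g_m)=h_1\cdot Pg_m=h_1\cdot g_1=u=Pf$ (using slice-constancy of $h_1$ to pull it through $P$), so $f-h_1\cdot g_m\in\Ker(P)=\Ker(\DL{2}\cdots\DL{m})$. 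By the inductive hypothesis for the index set $(\lambda_2,\ldots,\lambda_m)$, this difference is $\sum_{i=2}^m h_i\cdot E_{(\lambda_i,\ldots,\lambda_m)}$ with $h_i\in\SC(\OO)$, and adding back $h_1\cdot g_m=h_1\cdot E_{(\lambda_1,\ldots,\lambda_m)}$ yields exactly \eqref{eq:ker}.

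The step I expect to require the most care is the interchange of the slice-constant multiplier with the operator product $P=\DL{2}\cdots\DL{m}$: one needs $P(h_1\cdot g_m)=h_1\cdot Pg_m$, which follows by applying the single-operator identity $\DL{k}(h\cdot v)=h\cdot\DL{k}v$ successively, noting that slice-constancy of $h_1$ is preserved at each stage. A secondary subtlety is that the operators $\DL{k}$ need not commute when the $\lambda_k$ fail to pairwise commute, so the argument must respect the fixed left-to-right ordering of the factors throughout; this is why the recursion peels off the leftmost operator $\DL{1}$ first and why the functions $E_{(\lambda_i,\ldots,\lambda_m)}$ are indexed by the ordered tails $(\lambda_i,\ldots,\lambda_m)$ rather than arbitrary subsets.
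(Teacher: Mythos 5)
Your proof is correct, but it runs the induction in the mirror image of the paper's. The paper peels off the \emph{innermost} operator: it applies the inductive hypothesis to $\DL{m}f$, which solves the order-$(m-1)$ problem for the prefix tuple $(\lambda_1,\ldots,\lambda_{m-1})$, writes $\DL{m}f=\sum_{i=1}^{m-1}h_i\cdot E_{(\lambda_i,\ldots,\lambda_{m-1})}$, and then observes that $f-\sum_{i=1}^{m-1}h_i\cdot E_{(\lambda_i,\ldots,\lambda_m)}\in\Ker(\DL{m})$, finishing with a single application of Proposition \ref{pro:sol_hom}. You instead peel off the \emph{outermost} operator: you note that $u:=\DL{2}\cdots\DL{m}f\in\Ker(\DL{1})$, so $u=h_1\cdot\exp_{\lambda_1}(x)$, exhibit the particular preimage $h_1\cdot E_{(\lambda_1,\ldots,\lambda_m)}$ of $u$ under $P=\DL{2}\cdots\DL{m}$, and apply the inductive hypothesis to $\Ker(P)$, i.e., to the suffix tuple $(\lambda_2,\ldots,\lambda_m)$. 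Both routes use exactly the same ingredients (Proposition \ref{pro:Dm}, Proposition \ref{pro:sol_hom}, and the identity $\Dl(h\cdot f)=h\cdot\Dl f$ for $h\in\SC(\OO)$); the trade-off is that the paper needs the pull-through identity only for the single operator $\DL{m}$, whereas you must push $h_1$ through the whole product $P$ and construct a particular solution, but in exchange your inductive hypothesis applies directly to the tails $(\lambda_i,\ldots,\lambda_m)$ appearing in \eqref{eq:ker}, with no need to lift terms $E_{(\lambda_i,\ldots,\lambda_{m-1})}$ up to $E_{(\lambda_i,\ldots,\lambda_m)}$. A minor structural difference: the paper anchors the induction at $m=1,2$ (citing Corollary \ref{cor:order2}), while your inductive step already covers $m=2$ from the base case $m=1$ alone, so Corollary \ref{cor:order2}(1) comes out as a special case. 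Your attention to the non-commutativity of the $\DL{k}$ and to keeping the fixed left-to-right ordering of the factors is well placed and handled correctly, in particular in the observation that $\Ker(\DL{i}\cdots\DL{m})\subseteq\Ker(\DL{1}\cdots\DL{m})$ because the annihilating sub-product acts first.
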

\begin{proof}
From Proposition \ref{pro:Dm} we infer that $\DL{1}\cdots\DL{m}g_m=\DL{1}\cdots\DL{m-1}g_{m-1}=\cdots=\DL{1}g_1=0$. 
We prove \eqref{eq:ker} by induction over $m$. The cases $m=1,2$ have already been considered in Proposition \ref{pro:sol_hom} and in Corollary \ref{cor:order2}. Let $m\ge 3$ and assume that $f$ is a solution of \eqref{eq:eig_m}.
Then $\DL mf$ solves the equation $\DL{1}\cdots\DL{m-1}g=0$. By the induction hypothesis, $\DL mf$ has the form
\[
\DL mf= \sum_{i=1}^{m-1}h_i\cdot E_{(\lambda_i,\ldots,\lambda_{m-1})}
\]
with $h_i\in\SC(\OO)$ for every $i$. Therefore, $f-\sum_{i=1}^{m-1}h_i\cdot g_{\lambda_i,\ldots,\lambda_{m}}\in\Ker(\DL{m})$. From Proposition \ref{pro:sol_hom}, we get $f-\sum_{i=1}^{m-1}h_i\cdot g_{\lambda_i,\ldots,\lambda_{m}}=h_m\cdot\exp_{\lambda_m}(x)=h_m\cdot E_{(\lambda_m)}(x)$, with $h_m\in\SC(\OO)$. This proves \eqref{eq:ker} for every $m$.
\end{proof}

\begin{remarks}\label{rem:m}
(1)\quad 
If $\lambda_1=\cdots=\lambda_m=\lambda$, then the solutions of equation \eqref{eq:eig_m}, that now takes the form $(\Dl)^mf=0$, can be deduced directly from the complex case. If $\lambda\in\cc_I$, the functions $x^k\exp_{\lambda}(x)$ are one-slice preserving slice-regular, and then for any $k=0,\ldots,m-1$ the complex solutions $x^k\exp_{\lambda}(x)$ for $x\in\cc_I$ extend slice-regularly in a unique way to $\OO$, giving the general solution
\[
f=\sum\limits_{k=0}^{m-1} h_k\cdot (x^{k}\exp_{\lambda}(x))
\]
on $\OO$, with $h_k\in\SC(\OO)$. The same result can be obtained from Corollary \ref{cor:order_m}, since by Proposition \ref{pro:E} it holds $E_{(\lambda_i,\ldots,\lambda_m)}(x)=\frac{x^{m-i}}{(m-i)!}\exp_{\lambda}(x)$ for every $i=1,\ldots,m$.
\medskip

(2)\quad
Given $c_1,\ldots,c_m\in\hh$, the combination of generalized exponentials \[
f=\sum_{i=1}^mc_i\cdot E_{(\lambda_i,\ldots,\lambda_m)}
=\sum_{i=1}^m\sum_{n=m-i}^{+\infty}\frac{x^{n}c_i}{n!}\sum_{|K|=n-m+i}\lambda_i^{k_i}\cdots\lambda_m^{k_m}.
\]
is the unique entire solution of equation \eqref{eq:eig_m} such that
\[
\DL{i+1}\cdots\DL{m}f(0)=c_i\text{\quad for every }i=1,\ldots,m-1,\text{ and } f(0)=c_m.
\]
The \emph{canonical solution} $E_{(\lambda_1,\ldots,\lambda_m)}\not\in\ker(\DL{2}\cdots\DL{m})$ corresponds to the choice of parameters $c_1=1$, $c_2=\cdots=c_{m}=0$.

(3)\quad
If all the eigenvalues $\lambda_1,\ldots,\lambda_m$ commute and are distinct, then a simple computation shows that 
\[
f_m(x)=\exp_{\lambda_1}(x)\prod_{j=2}^m(\lambda_1-\lambda_j)^{-1}
\]
satisfies $\DL m f_m=f_{m-1}$. Then $f_m$ is another solution of \eqref{eq:eig_m}. Indeed, using the preceding remark it can be verified that under the commutativity assumption the following equality holds
\[
f_m(x)=
\sum_{i=1}^m\prod_{j=2}^i(\lambda_1-\lambda_j)^{-1}\cdot E_{(\lambda_i,\ldots,\lambda_m)}(x)=\sum_{i=1}^m E_{(\lambda_i,\ldots,\lambda_m)}(x)\prod_{j=2}^i(\lambda_1-\lambda_j)^{-1},
\]
in accordance with formula \eqref{eq:ker} of Corollary \ref{cor:order_m}. In the preceding formula the product is assumed to be equal to $1$ when $i=1$. Since the operators $\DL1,\ldots,\DL{m}$ commute, the general form of the solutions of \eqref{eq:eig_m} can be written in the form 
\[
f=\sum_{k=1}^m h_k\cdot \exp_{\lambda_k}(x)
\]
with $h_k\in\SC(\OO)$ for every $k$. Since $\lambda_1,\ldots,\lambda_m$ belong to a complex slice $\cc_I$ (unique is the eigenvalues are not all real), this result can be obtained also from the complex case.
\end{remarks}

\begin{examples}
(1)\quad 
A solution of $\mathcal D_{1+i}\mathcal D_{1+j}f=0$ is given by the generalized exponential
\[
E_{(1+i,1+j)}(x)=\sum_{n=1}^{+\infty}\frac{x^{n}}{n!}\sum_{|K|=n-1}(1+i)^{k_1}(1+j)^{k_2}.
\]
A direct computation shows that 
\[
E_{(1+i,1+j)}(x)=\frac12 e^x\left((x\cos x+\sin x) +x\sin x(i+j)+(-x\cos x+\sin x)k\right).
\]
It holds $E_{(1+i,1+j)}(0)=0$, $\mathcal D_{1+j}E_{(1+i,1+j)}(0)=E_{(1+i)}(0)=\exp_{1+i}(0)=1$. 
\medskip

(2)\quad
The canonical solution of $\mathcal D_i\mathcal D_j\mathcal D_kf=0$ is given by the generalized exponential
\[
E_{(i,j,k)}(x)=\sum_{n=2}^{+\infty}\frac{x^{n}}{n!}\sum_{|K|=n-2}i^{k_1}j^{k_2}k^{k_3}.
\]
A direct computation gives
\begin{align*}
E_{(i,j,k)}(x)&=\frac18\left(x(x+3)\cos x+(x^2+3x-3)\sin x\right)\\
&\quad +\frac18\left(-x(x+1)\cos x+(x^2+x+1)\sin x\right)(i+k)\\
&\quad +\frac18\left(x(x-1)\cos x+(x^2-x+1)\sin x\right)j.
\end{align*}
Moreover, it holds 
\[
E_{(i,j,k)}(0)=0,\ \mathcal D_{k}E_{(i,j,k)}(0)=E_{(i,j)}(0)=0,\ \mathcal D_{j}\mathcal D_{k}E_{(i,j,k)}(0)=E_{(i)}(0)=1.
\]

(3)\quad 
The generalized exponential functions also solve eigenvalue problems with multiplicities, i.e., with repeated eigenvalues in the sequence $\Lambda=(\lambda_1,\ldots,\lambda_m)$. For example, the equation $\mathcal D_i^2\mathcal D_jf=0$ has the canonical solution
\[
E_{(i,i,j)}(x)=\sum_{n=2}^{+\infty}\frac{x^{n}}{n!}\sum_{|K|=n-2}i^{k_1+k_2}j^{k_3}.
\]
The sum is the entire function
\begin{align*}
E_{(i,i,j)}(x)&=\frac14\left(x^2\cos x+x\sin x\right)+\frac14\left(-x\cos x+(x^2+1)\sin x \right)i\\
&\quad +\frac14\left(x\cos x+(x^2-1)\sin x\right)j +\frac14\left(-x\cos x+\sin k\right)k.
\end{align*}

(4)\quad In order to illustrate the effect of the non-commutativity of the operators $\DL{\ell}$,  we also compute the canonical solution 
\[
E_{(i,j,i)}(x)=\sum_{n=2}^{+\infty}\frac{x^{n}}{n!}\sum_{|K|=n-2}i^{k_1}j^{k_2}i^{k_3}
\]
of the equation $\mathcal D_i\mathcal D_j\mathcal D_if=0$. We get
\begin{align*}
E_{(i,j,i)}(x)&=\frac14\left(x^2\cos x+x\sin x\right)+\frac14\left(-x\cos x+(x^2+1)\sin x \right)i\\
&\quad +\frac12\left(-x\cos x+\sin x\right)j.
\end{align*}
\end{examples}

\subsection*{The non-homogeneous eigenvalue problem}
We return to the non-homogeneous eigenvalue equation $\Dl f=h$ \eqref{eig:s_nonhom} with a more general right-hand side $h$. Let 
\[h(x)=\sum_{n=0}^{+\infty}x^na_n\in\SR(\hh)\]
be an entire function. As suggested by the definition of the generalized exponential $g^{\mu,\lambda}=E_{(\mu,\lambda)}$ in Proposition \ref{pro:lmII}, we put 
\begin{equation}\label{eq:El}
\El(h):=\sum_{n=1}^{+\infty}\frac{x^n}{n!}\sum_{k=0}^{n-1}k!a_k\lambda^{n-k-1}.
\end{equation}

\begin{proposition}\label{pro:El}
Let $\lambda\in\hh$. If $\lambda\ne0$, assume that there exists a positive real constant $C$ and an integer $d\in\nn$ such that $n!|a_n|\le C (n+1)^d|\lambda|^n$ for every $n\in\nn$. 
Then the series \eqref{eq:El} converges uniformly on the compact sets of $\hh$ to an entire slice-regular function such that $\Dl(\El(h))=h$. As a consequence, $\Dl(g\cdot \El(h))=g\cdot h$ for every slice-constant $g$ on an axially symmetric domain $\OO$ in $\hh$. 
\end{proposition}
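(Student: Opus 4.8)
The plan is to treat Proposition~\ref{pro:El} exactly as its predecessors: first secure the uniform convergence of the defining series \eqref{eq:El} on compact sets (which simultaneously makes $\El(h)$ an entire slice-regular function and legitimizes term-by-term slice differentiation), then verify the identity $\Dl(\El(h))=h$ by the same telescoping cancellation that appears in the proofs of Propositions~\ref{pro:lmII} and~\ref{pro:Dm}, and finally read off the consequence from the factorization property $\Dl(g\cdot f)=g\cdot\Dl f$ for slice-constant $g$ recorded after Proposition~\ref{pro:sol_hom}.

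For the convergence I would write $\El(h)=\sum_{n=1}^{+\infty}\frac{x^n}{n!}b_n$ with $b_n:=\sum_{k=0}^{n-1}k!\,a_k\lambda^{n-k-1}$, and control $b_n$ by means of the hypothesis. Assuming $\lambda\ne0$, the bound $k!|a_k|\le C(k+1)^d|\lambda|^k$ gives
\[
|b_n|\le\sum_{k=0}^{n-1}k!\,|a_k|\,|\lambda|^{n-k-1}\le C|\lambda|^{n-1}\sum_{k=0}^{n-1}(k+1)^d\le C|\lambda|^{n-1}n^{d+1},
\]
so that on a compact set with $|x|\le R$ the $n$-th term is dominated by $\frac{C}{|\lambda|}\frac{(R|\lambda|)^n n^{d+1}}{n!}$. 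This majorant is summable by the ratio test, so the Weierstrass $M$-test yields uniform convergence on compacts; the same estimate applied to $\frac{x^{n-1}}{(n-1)!}b_n$ and to $\frac{x^n}{n!}b_n\lambda$ shows that the differentiated series and $\El(h)\lambda$ converge uniformly as well. A uniformly convergent power series with right quaternionic coefficients is entire slice-regular, which settles the first assertion when $\lambda\ne0$.

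For $\lambda=0$ I would observe that only the summand $k=n-1$ survives the factor $0^{n-k-1}$, so $b_n$ collapses to $(n-1)!\,a_{n-1}$ and $\El(h)=\sum_{n\ge1}\frac{x^n}{n}a_{n-1}$ converges on all of $\hh$ as soon as $h$ is entire, with no growth assumption required, and $\dd{}{x}\El(h)=h$ directly. In the general case, term-by-term slice differentiation is justified by the uniform convergence just obtained; computing $\dd{}{x}\El(h)-\El(h)\lambda$ and shifting the summation index in the first series leaves the coefficient of $\frac{x^m}{m!}$ equal to $b_{m+1}-b_m\lambda$, and since $b_{m+1}=\sum_{k=0}^{m}k!a_k\lambda^{m-k}$ and $b_m\lambda=\sum_{k=0}^{m-1}k!a_k\lambda^{m-k}$ coincide in all terms with $k\le m-1$, the difference reduces to $m!\,a_m$. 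Hence $\Dl(\El(h))=\sum_{m\ge0}x^m a_m=h$, and $\Dl(g\cdot\El(h))=g\cdot\Dl(\El(h))=g\cdot h$ follows at once.

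The genuinely technical step, and the only place the polynomial-growth hypothesis is used, is the convergence estimate: one must convert the factorials $k!|a_k|$ buried in $b_n$ into the clean bound $C|\lambda|^{n-1}n^{d+1}$, after which summability against $1/n!$ is automatic. By contrast, the telescoping that produces $b_{m+1}-b_m\lambda=m!\,a_m$ is routine, being the very mechanism already exploited in Proposition~\ref{pro:lmII}.
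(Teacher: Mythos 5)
Your proposal is correct and follows essentially the same route as the paper's proof: the same bound $|b_n|\le C n^{d+1}|\lambda|^{n-1}$ obtained from the growth hypothesis (the paper merely packages the summability via a comparison with $e^{|2x\lambda|}\,n^d/2^{n-1}$ rather than your ratio test), the same telescoping computation giving $b_{m+1}-b_m\lambda=m!\,a_m$, and the same appeal to $\Dl(g\cdot f)=g\cdot\Dl f$ for slice-constant $g$. Your explicit treatment of the $\lambda=0$ case matches the paper's remark that $\mathcal E_0(h)=\sum_{n\ge1}x^n a_{n-1}/n$ handles it immediately.
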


\begin{proof}
We prove that the series \eqref{eq:El} is uniformly convergent. The case $\lambda=0$ is immediate from the definition $\mathcal E_0(h)=\sum_{n=1}^{+\infty}x^na_{n-1}/n$. Assume $\lambda\ne0$ and let 
\[e_n(x):=\frac{x^n}{n!}\sum_{k=0}^{n-1}k!a_k\lambda^{n-k-1}.
\]
Let $n\ge1$. From the estimate
\begin{align*}
|e_n(x)|&\le C\frac{|x|^n}{n!}\sum_{k=0}^{n-1}(k+1)^d|\lambda|^{n-1}\le
C\frac{|x|^n}{n!}n^{d+1}|\lambda|^{n-1}= C\frac{|x||2x\lambda|^{n-1}}{(n-1)!}\frac{n^d}{2^{n-1}}\\
&\le C|x|e^{|2x\lambda|}\frac{n^d}{2^{n-1}}
\end{align*}
we directly observe the uniform convergence of the series \eqref{eq:El}.
It holds
\begin{align*}
\Dl(\El(h))&=\dd{}{x}\left(\sum_{n=1}^{+\infty}\frac{x^n}{n!}\sum_{k=0}^{n-1}k!a_k\lambda^{n-k-1}\right)-\left(\sum_{n=1}^{+\infty}\frac{x^n}{n!}\sum_{k=0}^{n-1}k!a_k\lambda^{n-k-1}\right)\lambda\\
&=\sum_{n=1}^{+\infty}\frac{nx^{n-1}}{n!}\sum_{k=0}^{n-1}k!a_k\lambda^{n-k-1}-\sum_{n=1}^{+\infty}\frac{x^n}{n!}\sum_{k=0}^{n-1}k!a_k\lambda^{n-k}\\
&=\sum_{n=0}^{+\infty}\frac{x^{n}}{n!}\sum_{k=0}^{n}k!a_k\lambda^{n-k}-\sum_{n=1}^{+\infty}\frac{x^n}{n!}\sum_{k=0}^{n-1}k!a_k\lambda^{n-k}\\
&=a_0+\sum_{n=1}^{+\infty}x^n a_n=h(x).
\end{align*}
\end{proof}

For every $\lambda\in\hh\setminus\{0\}$, we introduce an $\hh$-submodule of the right $\hh$-module of entire slice-regular functions on which it is possible to extend the operator $\El$. Let $\Al\subset\SR(\hh)$ be defined as
\[
\Al:=\big\{f=\sum_{n=0}^{+\infty}x^na_n\,|\,
 \text{$\exists C>0$,  $d\in\nn$ such that $n!|a_n|\le C (n+1)^d|\lambda|^n\ \forall n\in\nn$}\big\}.
\]

\begin{proposition}\label{pro:Al}
Let $\lambda\in\hh\setminus\{0\}$. The operators $\Dl$ and $\El$ map $\Al$ into $\Al$. The operator $\El$ is a right inverse of $\Dl$ on $\Al$.
\end{proposition}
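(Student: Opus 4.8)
The plan is to work entirely at the level of Taylor coefficients. Write $f=\sum_{n=0}^{+\infty}x^na_n\in\Al$, so that by definition there exist $C>0$ and $d\in\nn$ with $n!|a_n|\le C(n+1)^d|\lambda|^n$ for every $n$. I will read off the coefficients of $\Dl f$ and of $\El(f)$ and verify that each again satisfies a bound of the same shape, possibly with a larger polynomial exponent and a modified constant; membership in $\Al$ then follows, and the right-inverse property will be essentially free.

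For $\Dl$, differentiating term by term gives $\Dl f=\sum_{n=0}^{+\infty}x^n b_n$ with $b_n=(n+1)a_{n+1}-a_n\lambda$. Hence $n!|b_n|\le (n+1)!|a_{n+1}|+n!|a_n||\lambda|\le C(n+2)^d|\lambda|^{n+1}+C(n+1)^d|\lambda|^{n+1}$, where the first term uses the defining bound at index $n+1$. Using the elementary inequality $(n+2)^d\le 2^d(n+1)^d$, valid for all $n\ge0$, this is at most $C'(n+1)^d|\lambda|^n$ with $C'=C(2^d+1)|\lambda|$ and the \emph{same} exponent $d$. Therefore $\Dl f\in\Al$.

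For $\El$, formula \eqref{eq:El} gives $\El(f)=\sum_{n=1}^{+\infty}x^nc_n$ with $c_n=\frac{1}{n!}\sum_{k=0}^{n-1}k!\,a_k\lambda^{n-k-1}$ and $c_0=0$. Bounding each summand by the defining hypothesis, $n!|c_n|\le\sum_{k=0}^{n-1}k!|a_k||\lambda|^{n-k-1}\le C|\lambda|^{n-1}\sum_{k=0}^{n-1}(k+1)^d$. Since $\sum_{k=0}^{n-1}(k+1)^d=\sum_{j=1}^n j^d\le n^{d+1}\le(n+1)^{d+1}$, I obtain $n!|c_n|\le (C/|\lambda|)(n+1)^{d+1}|\lambda|^n$, a bound of the same form with exponent $d+1$ and constant $C/|\lambda|$. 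Thus $\El(f)\in\Al$; the convergence of the defining series of $\El(f)$ is subsumed by this estimate and is in any case already guaranteed by Proposition \ref{pro:El}, whose hypothesis is exactly membership in $\Al$.

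Finally, the right-inverse identity $\Dl(\El(h))=h$ for $h\in\Al$ is immediate, since the condition defining $\Al$ is precisely the growth hypothesis of Proposition \ref{pro:El}, which yields $\Dl(\El(h))=h$. I expect the only genuine work to be the two coefficient estimates above, and the only mild subtlety is bookkeeping on how the exponent $d$ transforms—left unchanged by $\Dl$, raised by one by $\El$—with the multiplicative constant absorbing the extra factors of $|\lambda|$; because every inequality used is elementary, no real obstacle arises.
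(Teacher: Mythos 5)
Your proof is correct and follows essentially the same route as the paper's: read off the Taylor coefficients $b_n=(n+1)a_{n+1}-a_n\lambda$ of $\Dl f$ and $c_n$ of $\El(f)$, verify the defining growth bound (unchanged exponent $d$ for $\Dl$, exponent $d+1$ for $\El$), and deduce the right-inverse identity directly from Proposition \ref{pro:El}. The only difference is that you make the constants explicit (e.g.\ $C'=C(2^d+1)|\lambda|$) where the paper simply asserts the existence of $C'$; this is a harmless refinement, not a different argument.
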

\begin{proof}
Let $f=\sum_{n=0}^\infty x^na_n\in\Al$. Then
\[
\Dl f=\sum_{n=1}^\infty x^{n-1}na_n-\sum_{n=0}^\infty x^na_n\lambda=\sum_{n=0}^\infty x^n\left((n+1)a_{n+1}-a_n\lambda\right)=:\sum_{n=0}^\infty x^nb_n.
\]
There exists a positive real $C'$ such that
\[
n!|b_n|=|(n+1)!a_{n+1}-n!a_n\lambda|\le C(n+2)^d|\lambda|^{n+1}+C(n+1)^d|\lambda|^{n+1}\le C'(n+1)^d|\lambda|^n
\]
for every $n\in\nn$, i.e., $\Dl f\in\Al$. Now let 
\[
\El(f)=\sum_{n=1}^{+\infty}\frac{x^n}{n!}\sum_{k=0}^{n-1}k!a_k\lambda^{n-k-1}=:\sum_{n=1}^{+\infty}x^nc_n.
\]
Then 
\[
n!|c_n|=\left|\sum_{k=0}^{n-1}k!a_k\lambda^{n-k-1}\right|\le C\sum_{k=0}^{n-1}(k+1)^d|\lambda|^{n-1}\le C|\lambda|^{-1}(n+1)^{d+1}|\lambda|^n,
\]
and this means that also $\El(f)\in\Al$. The last statement is a consequence of Proposition \ref{pro:El}.
\end{proof}

\begin{example}
Let $h(x)=\sin x$. Then 
\[
\mathcal E_i(h)=\sum_{n=1}^{+\infty}\frac{x^n}{n!}\sum_{h=0}^{\left\lfloor n/2-1\right\rfloor}(-1)^h i^{n-2h-2}=\frac12 x\sin x+\frac12\left(\sin x-x\cos x\right)i
\]
solves $\mathcal D_if=\sin x$, with $\mathcal E_i(h)(0)=0$. Since $h$ is slice-preserving (its series coefficients $a_n$ are all real), the solution $\mathcal E_i(h)$ can also be obtained from the complex solution.
\end{example}

\begin{remark}
The previous example suggests an alternative method to obtain the entire  function $\El(h)$ for a quaternionic function $h\in\Al$. In view of \cite[Lemma 6.1]{GMP}, given a real basis $\{1,\jmath,\kappa,\delta\}$ of $\hh$, every quaternionic slice function $h$ can be written as $h=h_0+h_1\jmath+h_2\kappa+h_3\delta$, with $h_i$ slice-preserving functions for $i=0,1,2,3$. Moreover, $h$ is slice-regular on a domain if and only all the functions $h_i$ are. It is also easy to verify that $h$ belongs to $\Al$ if and only if all the components $h_i\in\Al$.
If $\lambda\ne0$, given any real basis $\{\lambda,\lambda_1,\lambda_2,\lambda_3\}$ of $\hh$, also the set $\{1,\lambda_1\lambda^{-1},\lambda_2\lambda^{-1},\lambda_3\lambda^{-1}\}$ is a real basis of $\hh$. Therefore for any function $h\in\Al$ one can write
\[
h=h_0+h_1(\lambda_1\lambda^{-1})+h_2(\lambda_2\lambda^{-1})+h_3(\lambda_3\lambda^{-1})
\]
with slice-preserving functions $h_i$ in $\Al$. If $\lambda\in\cc_I$, the solution $f_i:=\El(h_i)$ of the equation \eqref{eig:s_nonhom} can be obtained for every $i=0,\ldots,3$ from the complex solution in $\cc_I$. Observe that the functions $f_i$ are one-slice preserving, since $f_i(\cc_I)\subseteq\cc_I$. We then have
\[
\Dl(f_0+\textstyle\sum_{i=1}^3(\lambda_i\lambda^{-1})\cdot f_i)=h_0+\sum_{i=1}^3(\lambda_i\lambda^{-1})\cdot h_i=
h_0+\sum_{i=1}^3 h_i (\lambda_i\lambda^{-1})=h.
\]
Here we point out that the second equality holds since the functions $h_i$ are slice-preserving. 
We then obtain that $f=f_0+\textstyle\sum_{i=1}^3(\lambda_i\lambda^{-1})\cdot f_i$ is the uniquely determined  entire solution $\El(h)$ of \eqref{eig:s_nonhom} that vanishes at the origin.
\end{remark}

\section{Eigenvalue problems for axially monogenic functions}
Let $\dcf$ denotes the Cauchy-Riemann-Fueter operator
\[\dcf =\frac12\left(\dd{}{x_0}+i\dd{}{x_1}+j\dd{}{x_2}+k\dd{}{x_3}\right),\]
and let $\cdcf$ be the conjugated operator
\[\cdcf =\frac12\left(\dd{}{x_0}-i\dd{}{x_1}-j\dd{}{x_2}-k\dd{}{x_3}\right).\]
Let $\OO$ be an axially symmetric domain of $\hh$. 
Our aim is to apply the results of the previous sections to the following eigenvalue problem for the (conjugated) Cauchy-Riemann-Fueter operator $\cdcf$ in the class of \emph{monogenic slice functions} (also called \emph{Fueter-regular functions} in the present quaternionic case):
\begin{equation}\label{eig:am}
\begin{cases}\displaystyle
\cdcf f=f\lambda\text{\quad on $\OO$}\\
 \text{with $f$ slice function s.t.\ }\dcf f=0\text{ on $\OO$ and $\lambda\in\hh$}.
\end{cases}
\end{equation}
Eigenvalue problems of the similar form $\dcf f = \lambda f $ have been studied rather extensively within the very general context where $f$ is some arbitrary function simply belonging to the function space $C^1(\Omega)$ without claiming any further conditions or properties on the considerable $f$ neither on the domain $\Omega$. 

The solutions of $(\dcf - \lambda)f = 0$ where $\dcf$ is the classical quaternionic Cauchy-Riemann-Fueter operator can be described in the form $e^{\lambda x_0} f$ where $f$ is an element of the kernel of $\dcf$, cf. for example \cite{GTD2004}, where the most general context of polynomial Cauchy-Riemann equations of general integer order $n$ and general multiplicity of the eigenvalues has been addressed extensively in the Clifford analysis setting, but under the assumption that $f$ is a general function from $C^n(\Omega)$. This general treatment also includes the special case of $k$-monogenic functions considered more than two decades earlier by F.~Brackx in \cite{Brackx1976}.   

Notice that above we consider slightly differently its conjugated operator $\cdcf$ and the action of $\lambda$ from the right hand-side. 

F.~Sommen and Xu Zhenyuan also studied in \cite{Xu1991,SommenXu1992} the analogous equation in the vector formalism where the Dirac operator is considered instead of the Cauchy-Riemann-Fueter operator exploiting decompositions in terms of axially monogenic functions which have been considered in the preceding work \cite{Sommen1988}. The particular three-dimensional case has already been treated by K.~G\"urlebeck in \cite{G86}. 
The conjugated Dirac operator coincides with the Dirac operator up to a minus sign. Also this operator is a first order operator factorizing the Laplacian. The connection to the Helmholtz operator has been explicitly addressed in \cite{G86}. See also \cite{KS1993} where this topic has been investigated more extensively.   
Even more generally, one also considered the case where $f$ belongs to a general Sobolev spaces, for instance to $H^1_p(\Omega)$, cf. \cite{GS1989}, however again without considering any further (symmetric) conditions. 

In our framework we now address the special situation where $f$ has additionally the property of being a slice function defined over an axially symmetric domain $\Omega$. These analytic and geometric aspects are new. This particular context allows us to use special series representations and the special properties of slice functions which do not hold in the general context in the above mentioned papers, as it will be explained in the sequel.

In order to tackle problem \eqref{eig:am}, we apply Fueter's Theorem, which can be seen as a bridge between the classes of slice-regular functions and monogenic functions.

We recall a useful concept introduced in \cite{GhPe_AIM}. Given a slice function $f$ on $\OO$, the function $f'_s:\OO \setminus \R \to \hh$, called  \emph{spherical derivative} of $f$, is defined as
\[
f'_s(x):=\tfrac{1}{2} \im(x)^{-1}(f(x)-f(\overline x)).
\] 
The function $f'_s$ is a slice function, constant on 2-\emph{spheres}\, $\s_x=\alpha+\s\beta$ for any $x=\alpha+J\beta\in\OO\setminus\R$. For any slice-regular function $f$ on $\OO$, $\sd{f}$ extends as the slice derivative $\dd{f}{x}$ on $\OO\cap\R$.
We recall also a result proved in \cite[Corollary~3.6.2 and Theorem~3.6.3]{Harmonicity} about some formulas linking the spherical derivative of slice functions with the Cauchy-Riemann-Fueter operator.

\begin{theorem}[\cite{Harmonicity}]\label{thm:harmonicity}
Let $\OO$ be an axially symmetric domain in $\HH$. Let  $f:\OO\to\HH$ be a slice function of class $\mathcal{C}^1(\OO)$. Then
\begin{enumerate}
\item
$f$ is slice-regular if and only if $\,\dcf f=-f'_s$.

\item
If $f:\OO\to\HH$ is slice-regular, then it holds:
\begin{enumerate}
\item
  The four real components of $f'_s$ are harmonic on $\OO$. 
\item
The following generalization of Fueter's Theorem holds:
\[\dcf\Delta f=\Delta \dcf f=-\Delta f'_s=0.\]
\item
$\Delta f=-4\,\dd{\sd f}{x}$.
\end{enumerate}
\end{enumerate}
\end{theorem}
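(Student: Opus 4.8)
The plan is to reduce every operator occurring in the statement to the inducing stem function $F=F_1+\ui F_2$ on $D$, where $f=\I(F)$ and, for $x=\alpha+J\beta$ with $\beta=|\im(x)|>0$ and $J=\im(x)/\beta$, one has $f(x)=F_1(\alpha,\beta)+JF_2(\alpha,\beta)$. The first bookkeeping step is to evaluate the defining formula of the spherical derivative: since $f(\overline x)=F_1-JF_2$ and $\im(x)^{-1}=-J/\beta$, a one-line computation gives the basic identity $\sd f=F_2/\beta$. In particular $\sd f$ is constant on each sphere $\s_x$ and is induced by the stem function whose imaginary ($\ui$-)component vanishes, so its own spherical derivative $(\sd f)'_s$ is zero.

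The heart of the proof is to establish, for \emph{every} slice function $f\in C^1(\OO)$, the two master identities
\[ \dcf f=\dd{f}{x^c}-\sd f, \qquad \cdcf f=\dd{f}{x}+\sd f. \]
To get these I would write $\dcf=\tfrac12(\partial_{x_0}+\sum_{\ell}e_\ell\partial_{x_\ell})$ with $e_1=i,\,e_2=j,\,e_3=k$, differentiate $f=F_1+JF_2$ in Cartesian coordinates (using $\partial_{x_\ell}\beta=x_\ell/\beta$ and $\partial_{x_\ell}J=e_\ell/\beta-(x_\ell/\beta^2)J$), and collect terms by means of the algebraic identities $\sum_\ell e_\ell^2=-3$, $\sum_\ell x_\ell e_\ell=\im(x)=\beta J$ and $\im(x)J=-\beta$. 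The scalar factors $x_\ell/\beta$ pull through, so that $\sum_\ell(x_\ell/\beta)e_\ell$ collapses to $J$; the only non-obvious contribution is the spherical term $-(2/\beta)F_2=-2\sd f$, produced by the derivatives of $J$. Comparing the outcome with the stem-function expressions $\dd{f}{x}=\I(\partial_zF)$ and $\dd{f}{x^c}=\I(\partial_{\overline z}F)$ yields the two identities. Granting them, part (1) is immediate, since $f$ is slice-regular exactly when $\dd{f}{x^c}=0$, i.e. exactly when $\dcf f=-\sd f$. Two points need care here: in the converse direction one has $\dd{f}{x^c}(x)=A+JB$ with $A,B$ depending only on $(\alpha,\beta)$, and axial symmetry makes this vanish for \emph{all} $J\in\s$ over a fixed $(\alpha,\beta)$, forcing $A=B=0$ and hence holomorphy of $F$; and the $1/\beta$ factors force a separate continuity argument on $\OO\cap\R$, where $F_2$ is odd in $\beta$ so that $\sd f$ extends to $\dd{f}{x}$.

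For part (2) I would assume $f$ slice-regular, so that $F$ is holomorphic and $F_1,F_2$ are componentwise harmonic in $(\alpha,\beta)$, and exploit the factorization $\Delta=4\,\cdcf\dcf=4\,\dcf\cdcf$. Claim (2c) then drops out of the master identities: because $(\sd f)'_s=0$ we have $\cdcf\sd f=\dd{\sd f}{x}$, and therefore $\Delta f=4\,\cdcf(\dcf f)=4\,\cdcf(-\sd f)=-4\,\dd{\sd f}{x}$, using part (1). For the first equality in (2b), $\Delta$ has real (scalar) constant coefficients and hence commutes with the constant-coefficient operator $\dcf$, giving $\dcf\Delta f=\Delta\dcf f$; applying part (1) once more rewrites this as $\Delta(-\sd f)=-\Delta\sd f$, and it remains only to see that this vanishes.

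That remaining fact is precisely claim (2a), which I would prove directly rather than through the operators. Since $\sd f$ depends on $x$ only through $(x_0,|\im(x)|)=(\alpha,\beta)$, its four real components are radial in the three imaginary variables, so $\Delta$ acts on $G:=F_2/\beta$ as $\Delta(\sd f)=\partial_\alpha^2G+\partial_\beta^2G+\tfrac2\beta\partial_\beta G$, the Euclidean Laplacian of a function of $x_0$ and the $3$-radius. Substituting $G=F_2\beta^{-1}$ and simplifying, every term carrying a bare power of $\beta^{-1}$ cancels and one is left with $\beta^{-1}(\partial_\alpha^2F_2+\partial_\beta^2F_2)$, which is zero because $F_2$ is harmonic in $(\alpha,\beta)$. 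This establishes (2a) and closes the last equality of (2b). I expect the main obstacle to be neither conceptual nor the final algebra, but the careful noncommutative bookkeeping in deriving the two master identities and, above all, the consistent treatment of the $1/\beta$ singularities at the real axis: all assertions are first proved on $\OO\setminus\R$ and then extended to $\OO\cap\R$ by continuity, using the $C^1$ hypothesis and the oddness of $F_2$ in $\beta$.
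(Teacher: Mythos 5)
Correct --- and note that the paper itself gives no proof of Theorem \ref{thm:harmonicity}: it is imported from \cite{Harmonicity} (Corollary~3.6.2 and Theorem~3.6.3), and your argument reconstructs essentially that reference's route, namely the stem-function identities $\dcf f=\dd{f\;}{x^c}-\sd f$ and $\cdcf f=\dd{f}{x}+\sd f$ for $C^1$ slice functions (your derivation of them, and the verification $\sd f=F_2/\beta$, are sound), from which (1) and (2c) are immediate and (2a)--(2b) follow by your radial-Laplacian computation $\Delta(F_2/\beta)=(\partial_\alpha^2F_2+\partial_\beta^2F_2)/\beta=0$. The one step to state more carefully is the passage across $\OO\cap\rr$ in (2a): continuity alone does not propagate harmonicity over the real axis; instead use that $F_2$, being odd in $\beta$ and real-analytic (since $F$ is holomorphic), can be written as $F_2=\beta G(\alpha,\beta^2)$ with $G$ real-analytic, so that $\sd f=G(x_0,|\im(x)|^2)$ is real-analytic on all of $\OO$ and $\Delta\sd f$, vanishing off the axis, vanishes identically --- or invoke the removable-singularity theorem for locally bounded harmonic functions on the complement of the capacity-zero set $\OO\cap\rr\subset\rr^4$.
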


In the following we will use also the following result.

\begin{lemma}\label{lem:affine}
(a)\quad 
If $f\in\SR(\OO)$ and $\Delta f=0$, then $f$ is a quaternionic affine function of the form $xa+b$, with $a,b\in\hh$.\\
(b)\quad The Laplacian of a slice-regular function can be expressed by first order derivatives. If $f\in\SR(\OO)$, for every $x\in\OO\setminus\rr$ it holds
\begin{align}\label{eq:Deltaf1}
\Delta f&=-2\frac{\IM(x)}{|\IM(x)|^2}\left(\sd f-\dd{f}{x}\right)=2\frac{\IM(x)}{|\IM(x)|^2}\left(\dcf f+\dd{f}{x}\right)\\
&=\frac{\IM(x)}{|\IM(x)|^2}\left(3\dd{f}{x_0}+i\dd{f}{x_1}+j\dd{f}{x_2}+k\dd{f}{x_3}\right).\label{eq:Deltaf2}
\end{align}
\end{lemma}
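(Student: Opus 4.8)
My plan is to derive both statements from the identity $\Delta f=-4\,\dd{\sd f}{x}$ of Theorem \ref{thm:harmonicity}(2c), combined with the stem-function description of the spherical derivative. Throughout I write the inducing stem function of $f$ as $F=F_1+\ui F_2$ and use the coordinates $x=\alpha+J\beta$ with $\beta=|\IM(x)|$ and $J=\IM(x)/|\IM(x)|$. A short computation with the definition of $\sd f$ shows that $\sd f$ is the slice function induced by the $\hh$-valued stem $F_2/\beta$; this stem is even in $\beta$ (since $F_2$ is odd) and carries no imaginary component, reflecting the fact, recalled in the text, that $\sd f$ is constant on the spheres $\s_x$. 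I will also use repeatedly the Cauchy--Riemann relations $\partial_\alpha F_1=\partial_\beta F_2$, $\partial_\alpha F_2=-\partial_\beta F_1$ that encode slice regularity of $f$.

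For part (a), I would first invoke Theorem \ref{thm:harmonicity}(2c) to rewrite the hypothesis $\Delta f=0$ as $\dd{\sd f}{x}=0$ on $\OO\setminus\R$. Since the stem $G_1:=F_2/\beta$ of $\sd f$ has no imaginary component, the vanishing $\dd{\sd f}{x}=\I(\partial_z G_1)=0$ forces $\partial_\alpha G_1=\partial_\beta G_1=0$, so that $\sd f$ is locally constant and hence (by connectedness of $\OO$ together with the evenness of $G_1$) equal to a single constant $a\in\hh$. Then I would set $h:=f-xa$, which is slice-regular with $\sd h=\sd f-\sd{(xa)}=a-a=0$; a slice-regular function with vanishing spherical derivative has stem $H=H_1$ satisfying $\partial_{\bar z}H_1=0$, whence $\partial_\alpha H_1=\partial_\beta H_1=0$ and $H_1$ is constant. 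Thus $h\equiv b\in\hh$ and $f=xa+b$.

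For part (b), the computational heart is the first equality. Expanding $\dd{\sd f}{x}=\I\!\left(\partial_z(F_2/\beta)\right)$ and substituting the Cauchy--Riemann relations, I expect to reach the identity
\[
\dd{\sd f}{x}=\frac{\IM(x)}{2|\IM(x)|^2}\left(\sd f-\dd f x\right),
\]
which, together with $\Delta f=-4\,\dd{\sd f}{x}$, produces the first expression in \eqref{eq:Deltaf1}. This is the step I expect to be the main obstacle: it is the only genuinely computational point, and it requires careful bookkeeping of the factor $1/\beta$ and of the non-commuting left multiplications by $J$ (using $\IM(x)=\beta J$, $J^2=-1$, $\IM(x)/|\IM(x)|^2=J/\beta$).

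The remaining two equalities are then purely algebraic rearrangements of the first. Since $f$ is slice-regular, Theorem \ref{thm:harmonicity}(1) gives $\dcf f=-\sd f$, so $\sd f-\dd f x=-(\dcf f+\dd f x)$ and the first expression in \eqref{eq:Deltaf1} turns into the second. For \eqref{eq:Deltaf2}, I would start from the definition $2\dcf f=\dd f{x_0}+i\dd f{x_1}+j\dd f{x_2}+k\dd f{x_3}$ and use that $\dd f{x_0}=\dd f x$ for slice-regular $f$, which follows at once from the same Cauchy--Riemann relations. Then
\[
3\dd f{x_0}+i\dd f{x_1}+j\dd f{x_2}+k\dd f{x_3}=2\dd f{x_0}+2\dcf f=2\left(\dd f x+\dcf f\right),
\]
so multiplying by $\IM(x)/|\IM(x)|^2$ reproduces exactly the second expression in \eqref{eq:Deltaf1}, closing the chain of equalities.
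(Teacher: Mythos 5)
Your proposal is correct and follows essentially the same route as the paper's own proof: both parts hinge on Theorem \ref{thm:harmonicity} (in particular $\Delta f=-4\,\dd{\sd f}{x}$), on the stem description $\sd{f}=\I(F_2/\beta)$ together with the Cauchy--Riemann relations for part (a), and your passage from \eqref{eq:Deltaf1} to \eqref{eq:Deltaf2} via $\dcf f=-\sd{f}$ and $\dd{f}{x}=\dd{f}{x_0}$ is exactly the paper's. The one step you announce but do not carry out, namely $\dd{}{z}\left(\tfrac{F_2}{\beta}\right)=\tfrac{i}{2\beta}\left(\tfrac{F_2}{\beta}-\dd{F}{z}\right)$ and hence $\dd{\sd f}{x}=\tfrac{\IM(x)}{2|\IM(x)|^2}\left(\sd{f}-\dd{f}{x}\right)$, is precisely the computation displayed in the paper's proof and goes through as you set it up: the product rule plus the identity $(\partial_\alpha-i\partial_\beta)F_2=-i\,\dd{F}{z}$ (holomorphy of $F$) give the stem identity, and applying $\I$ turns left multiplication by $i$ into left multiplication by $\IM(x)/|\IM(x)|$, so there is no gap.
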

\begin{proof}
Let $f=\I(F_1+iF_2)$, $z=\alpha+i\beta$.

(a)\quad From Theorem \ref{thm:harmonicity}(2c), it follows that $\dd{\sd f}{x}=0$, i.e., $\sd f=\I(\beta^{-1}F_2(\alpha+i\beta))$ is anti-slice-regular on $\OO$. Then the function $\beta^{-1}F_2(\alpha+i\beta)$ is locally constant, i.e., $F_2=\beta a$ with $a\in\hh$. Since $F$ is holomorphic, it follows that $F(z)=\alpha a+b+i\beta a=za+b$, and $f=\I(F)=xa+b$, with $b\in\hh$.

(b)\quad $\Delta f=-4\dd{\sd f}{x}=-4\I\left(\dd{}{z}\left(\frac{F_2}{\beta}\right)\right)$ and 
\[
\dd{}{z}\left(\frac{F_2}{\beta}\right)=\frac12(\partial_\alpha-i\partial_\beta)\left(\frac{F_2}{\beta}\right)=
\frac1{2\beta}(\partial_\alpha-i\partial_\beta)F_2+\frac{i}{2\beta^2}F_2=-\frac{i}{2\beta}\dd{F}{z}+\frac{i}{2\beta^2}F_2,
\]
since $F$ is holomorphic. Therefore, 
\[
\dd{\sd f}{x}=\I\left(\frac{i}{2\beta}\left(\frac{F_2}{\beta}-\dd{F}{z}\right)\right)=
\frac{\IM(x)}{2|\IM(x)|^2}\left(\sd f-\dd{f}{x}\right)
\]
and using Theorem \ref{thm:harmonicity}(1) we get formula \eqref{eq:Deltaf1}. Since $f$ is slice-regular, we have $\dd{f}{x}=\dd{f}{x_0}$ (see e.g.\ \cite{GeSt2007Adv}). This equality and \eqref{eq:Deltaf1} give formula \eqref{eq:Deltaf2}.
\end{proof}

Let $\AM(\OO)$ denote the class of axially monogenic functions, i.e., of monogenic slice functions on $\OO$:
\[\AM(\OO)=\{f\in\SF^1(\OO)\,|\,\dcf f=0\}.
\]
In view of the generalized Fueter's Theorem, the Laplacian maps the space $\SR(\OO)$ into $\AM(\OO)$. 
It is known that this map is surjective (see e.g.\ \cite{TheInverseFueter}).
Now we construct an operator $\LL:\AM(\OO)\to\AM(\OO)$ which makes the following diagram
\[
\begin{CD}
\SR(\OO) @>\dd{}{x}> >\SR(\OO)\\ 
@V \Delta V   V 
@V V \Delta V\\
\AM(\OO) @>\LL> >\AM(\OO) 
\end{CD} 
\]
commutative. Chosen a right inverse $\ID:\AM(\OO)\to\SR(\OO)$ of $\Delta$, we can set $\LL:=\Delta\circ\dd{}x\circ\ID$. To define $\ID$, we start from polynomials. For every $n\in\nn$, let
\begin{equation}\label{eq:defPn}
\Pn_n(x):=-\frac14\Delta(x^{n+2})=
\dd{}{x}\sd{(x^{n+2})}=\dd{\Zt_{n+1}(x)}{x}.
\end{equation}
Here $\Zt_{n}(x):=\sd{(x^{n+1})}$ is a harmonic homogeneous polynomial of degree $n-1$ in the four real variables $x_0$, $x_1$, $x_2$, $x_3$. The polynomials $\Zt_{n}$ are called \emph{zonal harmonic polynomials with pole 1}, since they have an axial symmetry with respect to the real axis (see \cite{Harmonicity,AlmansiH}). Observe that the polynomials $\Pn_n$ are axially monogenic but not zonal.

According to our knowledge, zonal monogenics were introduced in \cite{SommenJ1997}. See also \cite{Eelbode} where further basic properties have been studied. 

\begin{proposition}
The polynomials $\Pn_n$ are axially monogenic homogeneous polynomial of degree $n$ in $x_0$, $x_1$, $x_2$, $x_3$. They are slice functions on $\hh$, given by the explicit formula
\begin{equation}\label{eq:pn}
\Pn_n(x)=\sum_{k=1}^{n+1}kx^{k-1}\overline x^{n-k+1}=\I(\sum_{k=1}^{n+1}kz^{k-1}\overline z^{n-k+1}).
\end{equation}
In particular, $\Pn_n$ is slice-preserving and the restriction of $\Pn_n$ to the real axis is the monomial $\Pn_n(x_0)=\binom{n+2}2x_0^n$.
The inducing stem functions $p_n(z):=\sum_{k=1}^{n+1}kz^{k-1}\overline z^{n-k+1}$ satisfy the Vekua equation
\begin{equation}\label{eq:Vekua}
\dd{f}{\overline z}=\frac{f-\overline f}{z-\overline z}\text{\quad for $z\in\cc\setminus\rr$}.
\end{equation}
\end{proposition}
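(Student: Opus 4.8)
The plan is to establish the closed formula \eqref{eq:pn} first and then read off every other assertion from it. I would start from the definition $\Zt_{n+1}=\sd{(x^{n+2})}$ and compute the spherical derivative of a power directly from the definition of $\sd{}$. Since $\overline x = 2\re(x)-x$ lies in the same commutative slice $\cc_J$ as $x$, the elements $x$ and $\overline x$ commute, and $\im(x)=\tfrac12(x-\overline x)$; hence for every $m\ge1$
\[
\sd{(x^m)}=\tfrac12\im(x)^{-1}(x^m-\overline x^m)=\frac{x^m-\overline x^m}{x-\overline x}=\sum_{k=0}^{m-1}x^k\overline x^{\,m-1-k}.
\]
With $m=n+2$ this gives $\Zt_{n+1}=\sum_{k=0}^{n+1}x^k\overline x^{\,n+1-k}$. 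Because $x=\I(z)$ and $\overline x=\I(\overline z)$ are slice-preserving, the slice product equals the pointwise product, so $\Zt_{n+1}$ is induced by the stem function $\zeta_{n+1}(z)=\sum_{k=0}^{n+1}z^k\overline z^{\,n+1-k}$. Applying the slice derivative $\dd{}{x}=\I(\partial_z)$ and differentiating with the Wirtinger operator, $\partial_z(z^k\overline z^{\,n+1-k})=k\,z^{k-1}\overline z^{\,n+1-k}$, I obtain $\Pn_n=\dd{\Zt_{n+1}}{x}=\I\!\big(\sum_{k=1}^{n+1}k\,z^{k-1}\overline z^{\,n-k+1}\big)$, which is exactly \eqref{eq:pn}.

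From the explicit formula the remaining elementary properties are immediate. Each summand $k\,x^{k-1}\overline x^{\,n-k+1}$ has total degree $(k-1)+(n-k+1)=n$ in $x_0,x_1,x_2,x_3$, so $\Pn_n$ is homogeneous of degree $n$ (equivalently, $\Delta$ lowers homogeneity by two and $x^{n+2}$ has degree $n+2$). The coefficients $k$ being real, the stem function $p_n$ takes values in $1\otimes\cc$, so $\Pn_n$ is slice-preserving. Axial monogenicity follows from $\Pn_n=-\tfrac14\Delta(x^{n+2})$: by the generalized Fueter Theorem (Theorem \ref{thm:harmonicity}(2b)) $\dcf\Delta(x^{n+2})=0$, and since $\Delta$ maps $\SR(\hh)$ into $\AM(\hh)$ one gets $\Pn_n\in\AM(\hh)$. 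Finally, on the real axis $\overline x=x=x_0$, so $\Pn_n(x_0)=x_0^n\sum_{k=1}^{n+1}k=\binom{n+2}{2}x_0^n$.

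The main obstacle is the Vekua equation, and here I would exploit that $\zeta_{n+1}$ is real-valued: indeed $\overline{\zeta_{n+1}}=\sum_{k=0}^{n+1}\overline z^{\,k}z^{\,n+1-k}=\zeta_{n+1}$ (equivalently, $\Zt_{n+1}=\sd{(x^{n+2})}$ is $\R$-valued). Writing $\zeta_{n+1}=\phi(\alpha,\beta)$ with $\phi$ real and $z=\alpha+i\beta$, we have $p_n=\partial_z\phi$ and $\overline{p_n}=\partial_{\overline z}\phi$, so $p_n-\overline{p_n}=-i\,\partial_\beta\phi$ while $z-\overline z=2i\beta$; thus the right-hand side of the Vekua equation equals $-\tfrac{1}{2\beta}\partial_\beta\phi$. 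On the left-hand side $\dd{p_n}{\overline z}=\partial_{\overline z}\partial_z\phi=\tfrac14(\partial_\alpha^2+\partial_\beta^2)\phi$. Consequently the Vekua equation is equivalent to
\[
\partial_\alpha^2\phi+\partial_\beta^2\phi+\frac{2}{\beta}\,\partial_\beta\phi=0,
\]
which is precisely the reduced equation expressing that the axially symmetric function $\Zt_{n+1}$ is harmonic on $\rr^4$. This harmonicity is guaranteed by Theorem \ref{thm:harmonicity}(2a) applied to the slice-regular function $x^{n+2}$, whose spherical derivative is $\Zt_{n+1}$. The only delicate points to get right are the correct choice of conjugation (for the $1\otimes\cc$-valued $p_n$ it is ordinary complex conjugation) and the translation of the four-dimensional Laplacian into the $(\alpha,\beta)$-operator carrying the $\tfrac{2}{\beta}\partial_\beta$ term; once these are settled, the harmonicity result closes the argument.
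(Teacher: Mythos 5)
Your proof is correct, and the interesting divergence from the paper is in the treatment of the Vekua equation. For the explicit formula the two arguments essentially coincide, except that you derive $\sd{(x^{n+2})}=\sum_{k=0}^{n+1}x^k\overline{x}^{\,n+1-k}$ directly from the definition of the spherical derivative (using that $x$ and $\overline x$ commute in the slice $\cc_J$), whereas the paper simply cites \cite[Corollary 6.7]{Harmonicity}; your version is more self-contained, and the remaining properties (homogeneity, slice-preservation, monogenicity via Fueter's Theorem, the value $\binom{n+2}{2}x_0^n$ on the real axis) are read off exactly as in the paper. For the Vekua equation the paper proceeds by a purely algebraic verification: it expands $(z-\overline z)\,\dd{p_n}{\overline z}$ and $p_n-\overline{p_n}$ and checks that both equal $\sum_{k=0}^{n}(2k-n)z^k\overline z^{\,n-k}$. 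You instead exploit that $p_n=\partial_z\phi$ with $\phi=\zeta_{n+1}$ real-valued, so that the Vekua equation is equivalent to the axially symmetric Laplace equation $\partial_\alpha^2\phi+\partial_\beta^2\phi+\tfrac{2}{\beta}\partial_\beta\phi=0$, i.e.\ to the harmonicity of $\Zt_{n+1}$ on $\rr^4$, which is exactly Theorem \ref{thm:harmonicity}(2a) applied to $x^{n+2}$. This buys conceptual insight and generality: it shows the Vekua property is not special to the polynomials $p_n$ but holds for the stem function of $\dd{\sd{f}}{x}=-\tfrac14\Delta f$ for \emph{any} slice-regular $f$ (with the conjugation of $\hh\otimes\cc$), and it identifies the Vekua equation as the stem-level expression of harmonicity of axially symmetric potentials. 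The price is reliance on Theorem \ref{thm:harmonicity}(2a) and on the (standard, but unstated in the paper) formula for $\Delta_4$ acting on functions of $(x_0,|\IM(x)|)$, while the paper's computation is elementary and verifiable line by line. Two small points you should make explicit to fully close your argument: $\zeta_{n+1}$ is real-valued because conjugation merely reindexes the sum $\sum_{k}z^k\overline z^{\,n+1-k}$, and the computation, carried out for $\beta>0$ where $\beta=|\IM(x)|$, extends to all of $\cc\setminus\rr$ since $\phi$ is even in $\beta$ (or simply because the resulting identity is polynomial).
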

\begin{proof}
The first statement is a consequence of Fueter's Theorem.
Formula \eqref{eq:pn} follows from \cite[Corollary 6.7]{Harmonicity}, since
\[
\Pn_n(x)=\dd{}{x}\sd{(x^{n+2})}=\dd{}{x}(\sum_{k=0}^{n+1}x^{k}\overline x^{n-k+1})=\sum_{k=1}^{n+1}kx^{k-1}\overline x^{n-k+1}.
\]
To prove \eqref{eq:Vekua}, we observe that
\begin{align*}
(z-\overline z)\dd{p_n}{\overline z}&=(z-\overline z)\sum_{k=1}^{n}k(n-k+1)z^{k-1}\overline z^{n-k}\\
&=\sum_{k=1}^{n}k(n-k+1)z^{k}\overline z^{n-k}-\sum_{k=1}^{n}k(n-k+1)z^{k-1}\overline z^{n-k+1}\\
&=\sum_{k=1}^{n}k(n-k+1)z^{k}\overline z^{n-k}-\sum_{k=0}^{n-1}(k+1)(n-k)z^{k}\overline z^{n-k}=\sum_{k=0}^{n}(2k-n)z^{k}\overline z^{n-k},
\end{align*}
while
\begin{align*}
p_n(z)-\overline{p_n(z)}&=\sum_{k=1}^{n+1}kz^{k-1}\overline z^{n-k+1}-\sum_{k=1}^{n+1}k\overline z^{k-1} z^{n-k+1}\\
&=\sum_{k=1}^{n+1}kz^{k-1}\overline z^{n-k+1}-\sum_{k=1}^{n+1}(n-k+2) z^{k-1}\overline z^{n-k+1}\\
&=\sum_{k=1}^{n+1}(2k-n-2)z^{k-1}\overline z^{n-k+1}=\sum_{k=0}^{n}(2k-n)z^{k}\overline z^{n-k}.
\end{align*}
\end{proof}

\begin{example}
The first four axially monogenic polynomials $\Pn_n$ are 
\[\begin{cases}
\Pn_0(x)=1,\\
 \Pn_1(x)= 3 x_0+x_1i+x_2j+x_3k,\\
  \Pn_2(x)=6 x_0^2 - 2x_1^2 - 2x_2^2 - 2x_3^2+4x_0(x_1i+x_2j+x_3k),\\
  \Pn_3(x)=10x_0(x_0^2-x_1^2-x_2^2-x_3^2)+2(5x_0^2-x_1^2-x_2^2-x_3^2)(x_1i+x_2j+x_3k).
\end{cases}
\]
\end{example}

We define $\ID:\AM(\hh)\cap\hh[x_0,x_1,x_2,x_3]\to\hh[x]$ by extending linearly the mapping that associates the monomial $-\frac14x^{n+2}$ to the polynomial $\Pn_n$
\[
\ID\left(\textstyle\sum_{n=0}^d\Pn_n(x)a_n\right):=-\tfrac14\textstyle\sum_{n=0}^d x^{n+2}a_n.
\]
By applying the definitions, it follows that $\Delta\ID\Pn_n=\Pn_n$ for every $n\in\nn$, while $\ID\Delta x^n=x^n$ for every integer $n\ge2$, and of course $\ID\Delta x^n=0$ for $n=0,1$.

The mapping $\ID$ can be extended linearly to convergent series $\sum_{n\in\nn}\Pn_n(x)a_n\in\AM(\OO)$. 
If $f(x)=\sum_{n\in\nn}\Pn_n(x)a_n$, then $f(x_0)=\sum_{n\in\nn}\binom{n+2}2x_0^na_n$ for real $x_0$. Therefore 
\[a_n=\frac2{(n+2)!}\frac{\partial^n f}{\partial x_0^n}(0)\text{\quad for $n\in\nn$}
\]
and we can write
\[
\ID(f)=-\frac12\sum_{n\in\nn} \frac{x^{n+2}}{(n+2)!}\frac{\partial^n f}{\partial x_0^n}(0).
\]

\begin{proposition}\label{pro:L}
Let $\LL=\Delta\circ\dd{}x\circ\ID:\AM(\OO)\to\AM(\OO)$. 
Then it holds $\LL(\Pn_n)=(n+2)\Pn_{n-1}$ and $\cdcf\Pn_n=(n+2)\Pn_{n-1}$ for every $n\in\nn,n\ge1$. As a consequence, $\LL$ coincides with the conjugated Cauchy-Riemann-Fueter operator $\cdcf$ on $\AM(\OO)$.
\end{proposition}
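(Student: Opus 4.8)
The plan is to prove the two eigen-type identities on the polynomial family $\{\Pn_n\}$ and then extend to all of $\AM(\OO)$ by right-linearity and a series argument. First I would compute $\LL(\Pn_n)$ by unwinding the composition $\LL=\Delta\circ\dd{}{x}\circ\ID$. From the definition $\ID(\Pn_n)=-\tfrac14 x^{n+2}$, the identity $\dd{}{x}(x^{n+2})=(n+2)x^{n+1}$, and the $\rr$-linearity of $\Delta$, I obtain
\[
\LL(\Pn_n)=\Delta\dd{}{x}\left(-\tfrac14 x^{n+2}\right)=-\tfrac14(n+2)\Delta(x^{n+1})=(n+2)\Pn_{n-1},
\]
where the last step is precisely the defining relation $\Pn_{n-1}=-\tfrac14\Delta(x^{n+1})$ of \eqref{eq:defPn}. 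This is pure bookkeeping once the three operators are inserted.

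Next I would establish $\cdcf\Pn_n=(n+2)\Pn_{n-1}$ by a direct computation. The key remark is that $\dcf+\cdcf=\dd{}{x_0}$, hence $\cdcf=\dd{}{x_0}-\dcf$; since $\Pn_n$ is axially monogenic we have $\dcf\Pn_n=0$, and therefore $\cdcf\Pn_n=\dd{\Pn_n}{x_0}$. I would then differentiate the explicit expression $\Pn_n(x)=\sum_{k=1}^{n+1}k\,x^{k-1}\overline x^{\,n-k+1}$ of \eqref{eq:pn}. As $x$ and $\overline x$ commute and $\dd{x}{x_0}=\dd{\overline x}{x_0}=1$, the product rule produces two sums; after reindexing (the terms $k=1$ and $k=n+1$ drop out) the coefficient of $x^{m}\overline x^{\,n-1-m}$ equals
\[
(m+1)\bigl[(m+2)+(n-m)\bigr]=(n+2)(m+1),
\]
and these reassemble exactly into $(n+2)\Pn_{n-1}$. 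Hence $\cdcf\Pn_n=(n+2)\Pn_{n-1}=\LL(\Pn_n)$ for every $n\ge1$, while for $n=0$ both operators visibly annihilate the constant $\Pn_0=1$.

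Finally, to pass from the basis to the whole module $\AM(\OO)$, I would use the series representation recalled before the statement: every $f\in\AM(\OO)$ is a locally uniformly convergent series $f=\sum_{n\in\nn}\Pn_n(x)a_n$ with $a_n\in\hh$. Both $\LL$ and $\cdcf$ are right $\hh$-linear and commute with such limits, termwise differentiation being justified by the uniform convergence on compacts of the differentiated series, so agreement on each $\Pn_n$ forces $\LL f=\cdcf f$. I expect the computation of the second step to be the most laborious point, but the genuine conceptual care lies in the last step: one must ensure that the $\Pn_n$ really exhaust $\AM(\OO)$ through convergent series—which rests on the surjectivity of $\Delta\colon\SR(\OO)\to\AM(\OO)$ coming from Fueter's Theorem—and that both first-order operators may legitimately be applied termwise.
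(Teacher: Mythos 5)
Your proof is correct and follows the same overall route as the paper: verify both identities on the polynomials $\Pn_n$, then identify $\LL$ with $\cdcf$ by right $\hh$-linearity on convergent series $\sum_{n}\Pn_n(x)a_n$. Your computation of $\LL(\Pn_n)$ is exactly the paper's. For the second identity the common starting point is the reduction $\cdcf\Pn_n=(\cdcf+\dcf)\Pn_n=\dd{\Pn_n}{x_0}$, but from there the routes differ: the paper never touches the explicit formula \eqref{eq:pn}; it writes $\Pn_n=-\tfrac14\Delta(x^{n+2})$, commutes $\dd{}{x_0}$ with $\Delta$, and uses that $\dd{g}{x_0}=\dd{g}{x}$ for slice-regular $g$, so the first computation is recycled verbatim. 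You instead differentiate \eqref{eq:pn} term by term; your reindexing and the coefficient identity $(m+1)\bigl[(m+2)+(n-m)\bigr]=(n+2)(m+1)$ are correct, so this works, at the price of more bookkeeping and of leaning on \eqref{eq:pn}.

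One correction on your closing step. The surjectivity of $\Delta:\SR(\OO)\to\AM(\OO)$ does \emph{not} by itself give a locally uniformly convergent $\Pn_n$-series for every $f\in\AM(\OO)$: it gives $f=\Delta g$ with $g\in\SR(\OO)$, but on a general axially symmetric domain (in particular on a product domain such as $\hh\setminus\rr$) $g$ need not admit a power series expansion $\sum_n x^n a_n$, so $f$ need not be a series in the $\Pn_n$ with non-negative indices. The honest statement --- and all that the paper itself proves, since its proof also stops at the two identities --- is that $\LL=\Delta\circ\dd{}{x}\circ\ID$ is defined precisely on the submodule of such convergent series, and agrees with $\cdcf$ there. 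If you want the identity on all of $\AM(\OO)$, note that the paper's trick for the second identity closes the gap without any series: for arbitrary $g\in\SR(\OO)$ the function $\Delta g$ is monogenic by Fueter's Theorem, hence $\cdcf(\Delta g)=\dd{(\Delta g)}{x_0}=\Delta\left(\dd{g}{x_0}\right)=\Delta\left(\dd{g}{x}\right)$; combined with Lemma \ref{lem:affine}(a), which shows that $\Delta\circ\dd{}{x}\circ\ID$ does not depend on the choice of the right inverse $\ID$ of $\Delta$ (two choices differ by an affine function $xa+b$, whose slice derivative is the constant $a$, annihilated by $\Delta$), this yields $\LL f=\cdcf f$ for every $f=\Delta g$, i.e.\ on the whole image of $\Delta$.
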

\begin{proof}
By direct computation, using Theorem \ref{thm:harmonicity}(2c) and Definition \eqref{eq:defPn}, we get
\[
\Delta\left(\dd{(\ID\Pn_n)}{x}\right)=-\frac14\Delta\left(\dd{x^{n+2}}{x}\right)=
-\frac14\Delta\left((n+2)x^{n+1}\right)=(n+2)\dd{\sd{(x^{n+1})}}{x},
\]
that is $\LL(\Pn_n)=(n+2)\Pn_{n-1}$. On the other hand, since $\Pn_n$ is in the kernel of $\dcf$, it holds 
\[\cdcf\Pn_n=(\cdcf+\dcf)\Pn_n=\dd{\Pn_n}{x_0}=-\frac14\dd{(\Delta x^{n+2})}{x_0}=-\frac14\Delta\left(\dd{x^{n+2}}{x_0}\right).
\]
Since $x^{n+2}$ is slice-regular, we have $\dd{x^{n+2}}{x_0}=\dd{x^{n+2}}{x}$ (see e.g.\ \cite{GeSt2007Adv}) and then as before
\[
\cdcf\Pn_n=-\frac14\Delta\left(\dd{x^{n+2}}{x}\right)=(n+2)\Pn_{n-1}
\]
for every integer $n\ge1$.
\end{proof}

\begin{remark}
Definition \eqref{eq:defPn} can be extended to negative indices $n\in\zz$. One obtains axially monogenic functions $\Pn_n\in\AM(\hh\setminus\{0\})$ that satisfy the same relation as in the case $n\ge0$: $\cdcf\Pn_n=(n+2)\Pn_{n-1}$ for every $n<0$. Observe that $\Pn_{-1}=\Pn_{-2}=0$. The functions $\Pn_n$ and  $\Pn_{-n}$ are related through the Kelvin transform of $\rr^4$ (see \cite[Prop.5.1(c)]{Harmonicity}). It follows that also for negative $n$ the $\Pn_n$'s are homogeneous of degree $n$.
\end{remark}

Proposition \ref{pro:L} implies that the following diagram
\[
\begin{CD}
\SR(\OO) @>\dd{}{x}> >\SR(\OO)\\ 
@V \Delta V   V 
@V V \Delta V\\
\AM(\OO) @>\cdcf> >\AM(\OO) 
\end{CD} 
\]
is commutative, since it holds $\LL\circ\Delta(x^n)=\cdcf\circ\Delta(x^n)=\Delta\circ\dd{x^n}{x}$ for every $n\in\nn$.

Let $\lambda\in\hh$ and let $\LL_\lambda=\LL-R_\lambda=\cdcf-R_\lambda$ denote the linear operator mapping a  function $f\in\AM(\OO)$  to the monogenic slice function 
\[\LL_\lambda f=\cdcf f -f\lambda.\]
The solutions of the eigenvalue problem \eqref{eig:am} are exactly the elements of the kernel of $\LL_\lambda$. Notice that $\LL_\lambda=\Delta\circ\Dl\circ\ID$ and that $\LL_\lambda\circ\Delta=\Delta\circ\Dl$ for every $\lambda\in\hh$. 
From this relation and Proposition \ref{pro:sol_hom} we can deduce the following characterization of $\ker(\LL_\lambda)$.

\begin{proposition}\label{pro:am}
The axially monogenic function $\Delta\exp_\lambda(x)\in\AM(\hh)$ is a solution  of \eqref{eig:am} on $\hh$. If $\lambda\ne0$ and $\OO$ is a slice domain, a function $f\in\AM(\OO)$ is a solution of \eqref{eig:am} if and only if $f=c\cdot\Delta\exp_\lambda(x)$, with $c\in\hh$. If $\lambda=0$, $\ker(\LL)$ contains only the constants.
If $\OO$ is a product domain and $\lambda\ne0$, the operator $\LL_\lambda$ on $\AM(\OO)$ has kernel
\[
\ker(\LL_\lambda)=\{\Delta(g\cdot\exp_\lambda(x))\;|\; g\in\SC(\OO)\}.
\]
If $\lambda=0$,  $\ker(\LL)=\{c+\Delta g\;|\;c\in\hh, g\in\SC(\OO)\}$.
The function $\Delta\exp_\lambda(x)$ has the following expansion 
\[
\Delta\exp_\lambda(x)=-4\sum_{n=0}^{+\infty}\Pn_n(x)\frac{\lambda^{n+2}}{(n+2)!}=-4\sum_{n=0}^{+\infty}\sum_{k=1}^{n+1}kx^{k-1}\overline x^{n-k+1}\frac{\lambda^{n+2}}{(n+2)!}.
\]
\end{proposition}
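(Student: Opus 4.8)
The plan is to deduce every assertion from the two facts recorded just before the statement: the intertwining relation $\LL_\lambda\circ\Delta=\Delta\circ\Dl$ and the surjectivity of $\Delta\colon\SR(\OO)\to\AM(\OO)$ (the inverse Fueter theorem). Existence is immediate. Since $\exp_\lambda(x)$ solves the homogeneous slice-regular problem $\Dl\exp_\lambda(x)=0$ (Remark \ref{rem:exp}) and $\Delta$ sends $\SR(\OO)$ into $\AM(\OO)$ by Fueter's Theorem, we get $\LL_\lambda(\Delta\exp_\lambda(x))=\Delta\Dl\exp_\lambda(x)=0$, so $\Delta\exp_\lambda(x)\in\AM(\hh)$ solves \eqref{eig:am}. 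More generally, for any slice-constant $g$ one has $\LL_\lambda(\Delta(g\cdot\exp_\lambda(x)))=\Delta\Dl(g\cdot\exp_\lambda(x))=\Delta(g\cdot\Dl\exp_\lambda(x))=0$; this already gives the inclusion $\supseteq$ in the product-domain description, and the reverse inclusions will come from the lifting argument below.

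For the core characterization with $\lambda\ne0$ I would take any $f\in\ker(\LL_\lambda)$ and, using surjectivity, choose $u\in\SR(\OO)$ with $\Delta u=f$. The intertwining relation gives $\Delta\Dl u=\LL_\lambda\Delta u=\LL_\lambda f=0$, so $\Dl u\in\ker\Delta\cap\SR(\OO)$, which by Lemma \ref{lem:affine}(a) forces $\Dl u=xa+b$ to be affine. I would then invert this non-homogeneous slice-regular equation via Proposition \ref{pro:polynomial} (equivalently Remark \ref{rem:01}): the particular solution $u_*(x)=-a\cdot(\lambda^{-2}+x\lambda^{-1})-b\lambda^{-1}$ is itself affine, hence harmonic, and the general solution is $u=u_*+g\cdot\exp_\lambda(x)$ with $g\in\SC(\OO)$. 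Applying $\Delta$ annihilates $u_*$ and leaves $f=\Delta(g\cdot\exp_\lambda(x))$, exactly the product-domain statement. On a slice domain $\SC(\OO)=\hh$, so $g=c$ is a quaternionic constant, and one simplifies further using $\Delta(c\cdot\exp_\lambda(x))=c\cdot\Delta\exp_\lambda(x)$; this identity follows from Theorem \ref{thm:harmonicity}(2c) together with the Leibniz rule, since $\sd{(c\cdot f)}=c\cdot\sd f$ and $\dd{}{x}$ commutes with slice multiplication by the constant $c$. This yields the form $f=c\cdot\Delta\exp_\lambda(x)$.

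The case $\lambda=0$ needs separate bookkeeping, and this is where I expect the main obstacle, since the affine/quadratic ambiguity no longer disappears under $\Delta$. Here $\Dl=\dd{}{x}$, and the same reduction gives $\dd{u}{x}=xa+b$ affine; integrating yields $u=\tfrac{x^2}{2}a+xb+g_0$ with $g_0\in\SC(\OO)$. The quadratic term survives the Laplacian: from $\Pn_n=-\tfrac14\Delta(x^{n+2})$ we have $\Delta(x^2)=-4\Pn_0=-4$, so $\Delta(\tfrac{x^2}{2}a)=-2a$, while $\Delta(xb)=0$. Hence $f=\Delta u=-2a+\Delta g_0$, giving $\ker(\LL)=\{c+\Delta g\mid c\in\hh,\ g\in\SC(\OO)\}$ on a product domain; on a slice domain $g_0$ is a genuine constant, so $\Delta g_0=0$ and only the constants $c=-2a$ remain. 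Throughout, the delicate point is tracking precisely which part of the $\Dl$-antiderivative is killed by $\Delta$ and which survives.

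Finally, for the explicit expansion I would differentiate $\exp_\lambda(x)=\sum_{m\ge0}x^m\lambda^m/m!$ term by term. Since $\Delta(x^m)=-4\Pn_{m-2}$ for $m\ge2$ and $\Delta(x^m)=0$ for $m\le1$, and right multiplication by the constant $\lambda^m$ commutes with the real operator $\Delta$, reindexing by $n=m-2$ gives $\Delta\exp_\lambda(x)=-4\sum_{n\ge0}\Pn_n(x)\lambda^{n+2}/(n+2)!$; inserting formula \eqref{eq:pn} for $\Pn_n$ produces the second expression. Term-by-term application of $\Delta$ is justified because, using $|\Pn_n(x)|\le\binom{n+2}{2}|x|^n$, the resulting series is dominated on compact sets by $2|\lambda|^2\sum_{n\ge0}|x\lambda|^n/n!=2|\lambda|^2e^{|x\lambda|}$, hence converges uniformly there.
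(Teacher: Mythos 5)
Your proposal is correct and follows essentially the same route as the paper: lift $f\in\ker(\LL_\lambda)$ to a slice-regular $u$ via surjectivity of $\Delta$ (inverse Fueter theorem), use the intertwining $\LL_\lambda\circ\Delta=\Delta\circ\Dl$ and Lemma \ref{lem:affine}(a) to force $\Dl u$ affine, invert via Proposition \ref{pro:polynomial} with an affine (hence harmonic) particular solution, and handle $\lambda=0$ by integrating $\dd{u}{x}=xa+b$ and tracking the surviving term $\Delta(x^2a/2)=-2a$. In fact you supply two details the paper leaves implicit --- the justification of $\Delta(c\cdot\exp_\lambda(x))=c\cdot\Delta\exp_\lambda(x)$ via Theorem \ref{thm:harmonicity}(2c) and the uniform-convergence estimate legitimizing the term-by-term expansion of $\Delta\exp_\lambda(x)$ --- so your write-up is, if anything, slightly more complete.
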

\begin{proof}
Let $\lambda\ne0$. 
The equality $\Dl f=0$ implies that $\LL_\lambda(\Delta f)=0$. Conversely, if $\LL_\lambda(\Delta f)=\Delta(\Dl f)=0$, with $f\in\SR(\OO)$, then the function $\Dl f$ is a quaternionic affine function of the form $xa+b$, with $a,b\in\hh$ (Lemma \ref{lem:affine}).
From Proposition \ref{pro:polynomial} it follows that $f=-a\lambda^{-2}-b\lambda^{-1}-x(a\lambda^{-1})$, up to an element of $\ker(\Dl)$. Therefore $\Delta f=\Delta g$, with $g\in\ker(\Dl)$. If $\OO$ is a slice domain, then $\Delta f=\Delta(c\cdot\exp_\lambda(x))=c\cdot\Delta\exp_\lambda(x)$, with $c\in\hh$. If $\OO$ is a product domain, then $\Delta f=\Delta(g\cdot\exp_\lambda(x))$, with $g\in\SC(\OO)$.

If $\lambda=0$ and $\LL(\Delta f)=0$, then $\Delta(\dd{f}{x})=0$ and therefore $\dd{f}{x}=xa+b$ is affine. Since $f\in\SR(\OO)$, it follows that $f=x^2 a/2+bx+c+g$, with $a,b,c\in\hh$ and $g\in\SR(\OO)\cap\ker(\dd{}{x})=\SC(\OO)$. 
If $\OO$ is a slice domain, we get that $\Delta f$ is a constant. If $\OO$ is a product domain, $\Delta f=-2a+\Delta g$, with $g\in\SC(\OO)$ and then $\ker(\LL)=\{c+\Delta g\;|\;c\in\hh, g\in\SC(\OO)\}$.
\end{proof}

\begin{example}\label{ex:expj}
Since $\exp_j(x)=\cos x+(\sin x)j$, from Lemma \ref{lem:affine} it follows that
\[
\Delta \exp_j(x)
=-2\tfrac{\IM(x)}{|\IM(x)|^2}\left(\sd{(\cos x)}+\sd{(\sin x)}j+\sin x-(\cos x) j\right).
\]
A direct computation shows that
\[
\sd{(\cos x)}=\frac{e^{-\beta}-e^\beta}{2\beta}\sin(x_0),\quad \sd{(\sin x)}=\frac{e^{\beta}-e^{-\beta}}{2\beta}\cos(x_0),
\]
where $\beta=\sqrt{x_1^2+x_2^2+x_3^2}$. The axially monogenic function $\Delta\exp_j(x)$ on $\hh$ satisfies $\LL_j(\Delta\exp_j(x))=0$, i.e., $\partial (\Delta\exp_j(x))=\Delta\exp_j(x)j$. 
\end{example}

\begin{remark}\label{rem:mui}
If $\OO=\hh\setminus\rr$, a product domain, the results of Proposition \ref{pro:am} can be made more explicit.
Let $I\in\s$ and $\mu_I\in\SC(\hh\setminus\rr)$ as in \eqref{eq:muI}. The equality $\Delta\mu_I=-\frac{\IM(x)}{|\IM(x)|^3}I$ and formula \eqref{eq:muJK} imply that 
\[\ker(\LL)=\ker(\cdcf)\cap\ker(\dcf)=\left\{a+\tfrac{\IM(x)}{|\IM(x)|^3}b\;|\;a,b\in\hh\right\},\]
while for $\lambda\ne0$ the elements of $\ker(\LL_\lambda)$ are $\hh$-linear combinations of functions of the form
\[
\Delta(\mu_I\cdot\exp_\lambda(x))=\sum_{n=0}^{+\infty}\Delta(x^n\mu_I(x))\frac{\lambda^n}{n!}.
\]
\end{remark}

Now consider the non-homogeneous equation \eqref{eig:s_nonhom}, with $f,h\in\SR(\OO)$. The equality $\Dl f=h$ implies that $\LL_\lambda(\Delta f)=\Delta h$. Conversely, if $\LL_\lambda(\Delta f)=\Delta h$ for $f,h\in\SR(\OO)$, then $\Delta(\Dl f-h)=0$ and thanks to Lemma \ref{lem:affine} the function $\Dl f-h$ is a quaternionic affine function of the form $xa+b$, with $a,b\in\hh$. If $h\in\hh[x]$ and $\lambda\ne0$, then Proposition \ref{pro:polynomial} gives $f=\Sl(h+xa+b)+g=\Sl(h)+xa'+b'+g$, with $a',b'\in\hh$ and $g\in\ker(\Dl)$. Here $\Sl$ is the solution operator defined in \eqref{def:Sl}. 
Therefore $\Delta f=\Delta\Sl(h)+\Delta g$, with $g\in\ker(\Dl)$.

\subsection*{Eigenvalue problem of the $m$th-order for axially monogenic functions}
Now we generalize Proposition \ref{pro:am} to eigenvalue problems of any order $m$. Let $\lambda_1,\ldots,\lambda_m\in\hh$ and consider the equation
\begin{equation}\label{eq:am_eig_m}
\begin{cases}\displaystyle
\LLL{1}\cdots\LLL{m} f=0\text{\quad on $\OO$},\\
\text{with }f\in\AM(\OO).
\end{cases}
\end{equation}

\begin{definition}\label{def:DeltaE}
The \emph{generalized $\Delta$-exponential function} associated with the $n$-tuple $\Lambda=(\lambda_1,\ldots,\lambda_m)\in\hh^n$ is the axially monogenic function $E^\Delta_\Lambda(x)$ defined on $\hh$ by
\[
E^\Delta_\Lambda(x):=\Delta(E_\Lambda(x))=-4\sum_{n=m-1}^{+\infty}\frac{\Pn_{n-2}(x)}{n!}\sum_{|K|=n-m+1}\lambda_1^{k_1}\cdots\lambda_m^{k_m},
\]
where the sum is extended over the multi-indices $K=(k_1,\ldots,k_m)$ of non-negative integers such that $|K|=k_1+\cdots+k_m=n-m+1$. In particular, it holds $E^\Delta_{(\lambda)}=\Delta\exp_{\lambda}(x)$ for every $\lambda\in\hh$.
\end{definition}

In view of formula \eqref{eq:pn}, we have 
\[
E^\Delta_\Lambda(x)=-4\sum_{n=m-1}^{+\infty}\frac{1}{n!}\sum_{k=1}^{n-1}kx^{k-1}\overline x^{n-k-1}
\sum_{|K|=n-m+1}\lambda_1^{k_1}\cdots\lambda_m^{k_m}.
\]
Observe that for real values $\lambda$, the $\Delta$-exponential $E^\Delta_{(\lambda)}(x)$ coincides up to a multiplicative constant with the function $\text{EXP}_3(\lambda x)$ defined in \cite[Ex.11.34]{GHS} in the context of Clifford algebras: \[
E^\Delta_{(\lambda)}(x)=-2e^{\lambda x_0}(\sinc(\lambda\beta)-I_x((\sinc(\lambda\beta))')
\]
where $I_x=\tfrac{\IM(x)}{|\IM(x)|}$, $\beta=|\IM(x)|$, $\lambda\in\rr$.

\begin{proposition}\label{pro:am_m}
The axially monogenic function $E^\Delta_\Lambda\in\AM(\hh)$ is a solution  of \eqref{eq:am_eig_m}. 
In general, every function of the form
\begin{equation}\label{eq:sol_delta_m}
\sum_{i=1}^m \Delta(h_i\cdot E_{(\lambda_i,\ldots,\lambda_m)}),
\end{equation}
with $h_i\in\SC(\OO)$, is a solution of \eqref{eq:am_eig_m}.
If $\lambda_i\ne0$ for every $i=1,\ldots,m$, all the solutions of \eqref{eq:am_eig_m} are of the form \eqref{eq:sol_delta_m}.
\end{proposition}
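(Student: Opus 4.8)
The plan is to push the entire problem through the Laplacian. The three tools I would lean on are the commutation relation $\LL_\lambda\circ\Delta=\Delta\circ\Dl$ established just before Proposition \ref{pro:am}, the complete description of $\ker(\DL1\cdots\DL m)$ supplied by Corollary \ref{cor:order_m}, and the surjectivity of $\Delta:\SR(\OO)\to\AM(\OO)$ (recalled after Lemma \ref{lem:affine}). The strategy is to show that a slice-regular preimage of an axially monogenic solution can always be corrected by a harmonic term so that it lands in $\ker(\DL1\cdots\DL m)$, the hypothesis $\lambda_i\ne0$ being exactly what makes this correction possible.

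First I would dispose of the two "sufficiency" assertions. Applying the single-factor relation with $\lambda=\lambda_\ell$ gives $\LLL\ell\circ\Delta=\Delta\circ\DL\ell$, and iterating one factor at a time yields
\[
\LLL1\cdots\LLL m\,\Delta(g)=\Delta\big(\DL1\cdots\DL m\,g\big)\qquad\text{for every }g\in\SR(\OO).
\]
Taking $g=E_\Lambda$ and invoking $\DL1\cdots\DL m\,E_\Lambda=0$ (Proposition \ref{pro:Dm} and Corollary \ref{cor:order_m}) shows that $E^\Delta_\Lambda=\Delta(E_\Lambda)$ solves \eqref{eq:am_eig_m}; taking $g=\sum_i h_i\cdot E_{(\lambda_i,\ldots,\lambda_m)}$, which lies in $\ker(\DL1\cdots\DL m)$ by Corollary \ref{cor:order_m}, shows that every function of the form \eqref{eq:sol_delta_m} is a solution.

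The substantive part is the converse. Given a solution $F\in\AM(\OO)$ of \eqref{eq:am_eig_m}, surjectivity of $\Delta$ furnishes $f\in\SR(\OO)$ with $\Delta f=F$, and the displayed identity gives $\Delta(\DL1\cdots\DL m f)=\LLL1\cdots\LLL m F=0$. Since $\DL1\cdots\DL m f$ is slice-regular and harmonic, Lemma \ref{lem:affine}(a) forces it to be an affine function $xa+b$. The \emph{main obstacle}, and the only place where $\lambda_i\ne0$ is used, is to absorb this affine remainder: I would check that each $\DL i$ maps the real $8$-dimensional space of affine functions into itself by the explicit rule $xc+d\mapsto -x(c\lambda_i)+(c-d\lambda_i)$, which is triangular in $(c,d)$ with right multiplication by the nonzero $\lambda_i$ on each block, hence a bijection of the affine space (here one must keep in mind that $\DL i$ is only $\rr$-linear, not right $\hh$-linear). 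Composing, $\DL1\cdots\DL m$ is a bijection of the affine functions, so there is a unique affine $f_0$ with $\DL1\cdots\DL m f_0=xa+b$.

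To finish, observe that $f_0$ affine implies $\Delta f_0=0$, while $f-f_0\in\ker(\DL1\cdots\DL m)$, so Corollary \ref{cor:order_m} gives $f-f_0=\sum_{i=1}^m h_i\cdot E_{(\lambda_i,\ldots,\lambda_m)}$ with $h_i\in\SC(\OO)$. Applying $\Delta$ and using $\Delta f_0=0$ then yields
\[
F=\Delta f=\Delta(f-f_0)=\sum_{i=1}^m\Delta\big(h_i\cdot E_{(\lambda_i,\ldots,\lambda_m)}\big),
\]
which is precisely the form \eqref{eq:sol_delta_m}. Everything except the bijectivity of $\DL i$ on affine functions is a formal transfer through $\Delta$; that bijectivity, where $\lambda_i\ne0$ is indispensable, is the one computation I would carry out with care.
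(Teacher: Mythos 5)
Your proof is correct and takes essentially the same route as the paper: push everything through $\Delta$ via the commutation relation $\LL_\lambda\circ\Delta=\Delta\circ\Dl$, use Lemma \ref{lem:affine}(a) to see that $\DL{1}\cdots\DL{m}$ applied to a slice-regular preimage is an affine function $xa+b$, cancel that affine remainder using $\lambda_i\ne0$, and conclude with Corollary \ref{cor:order_m} together with $\Delta(xa'+b')=0$. The only difference is bookkeeping: the paper produces the affine correction as $\SF_{\lambda_m}\cdots\SF_{\lambda_1}(xa+b)$ using the right-inverse operators of \eqref{def:Sl}, whereas you verify directly that each $\DL{i}$ restricts to a bijection of the affine functions; both hinge on exactly the same point, the invertibility of right multiplication by $\lambda_i\ne0$.
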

\begin{proof}
The first two statements follow immediately from Corollary \ref{cor:order_m}:
\[
\LLL{1}\cdots\LLL{m}(E^\Delta_\Lambda)=\LLL{1}\cdots\LLL{m}(\Delta(E_\Lambda))=\Delta(\DL{1}\cdots\DL{m}E_\Lambda)=0.
\]
The same computation holds for every function of the form \eqref{eq:sol_delta_m}.

Assume that $\lambda_i\ne0$ for every $i$ and that $\LLL{1}\cdots\LLL{m}(\Delta g)=0$. Then it holds $\Delta(\DL{1}\cdots\DL{m}g)=0$, which implies by Lemma \ref{lem:affine} that $\DL{1}\cdots\DL{m}g$ is a quaternionic affine function of the form $xa+b$, with $a,b\in\hh$. Therefore we have
\[
\DL{1}\cdots\DL{m}g=xa+b=\DL{1}\cdots\DL{m}\SF_{\lambda_m}\cdots\SF_{\lambda_1}(xa+b),
\]
where $\Sl$ is the right inverse of $\Dl$ on polynomials defined for any $\lambda\ne0$ in \eqref{def:Sl}. By definition, also $\SF_{\lambda_m}\cdots\SF_{\lambda_1}(xa+b)$ is an affine polynomial $xa'+b'$. Then 
$g-xa'-b'\in\ker(\DL{1}\cdots\DL{m})$ and thanks to Corollary \ref{cor:order_m}, $f=\Delta g$ is of the form \eqref{eq:sol_delta_m}.
\end{proof}

\begin{example}\label{eq33}
Let $\lambda_1=i$, $\lambda_2=j$. The solution of $\mathcal D_i\mathcal D_jg=0$ on $\hh$ given in Examples \ref{ex1} is the slice-regular entire function
\[
E_{(i,j)}(x)=\frac12\left(\sin x+x\cos x+(x\sin x)(i+j)+(\sin x-x\cos x)k\right).
\]
Therefore $f:=\Delta E_{(i,j)}(x)=E^\Delta_{(i,j)}(x)$ is an axially monogenic solution of the equation $\LL_i\LL_j f=0$, that is
\[
(\cdcf-R_i)(\cdcf-R_j)f = \cdcf^2 f-(\cdcf f)(i+j)-fk=0.
\]

\end{example}

\section{Applications}
As a very interesting bi-product we can relate the solutions to $\LLL{1}\LLL{2} f = 0$ to axially monogenic solutions to the 3D and 4D time-harmonic Helmholtz and stationary massless Klein-Gordon equation considered in an axially symmetric domain. In the sequel we abbreviate for convenience the purely quaternionic part of the Cauchy-Riemann-Fueter operator by ${\mathcal{D}}_x := i \frac{\partial }{\partial x_1} + j \frac{\partial }{\partial x_2} + k  \frac{\partial }{\partial x_3}$. Actually, ${\mathcal{D}}_x$ is often called the Euclidean Dirac operator and it satisfies ${\mathcal{D}}_x^2 = - \Delta_3$, where $\Delta_3$ is the ordinary Euclidean Laplacian in $\mathbb{R}^3=\IM(\hh)$. Notice that $\overline{\partial} \partial = \frac{1}{4} \Delta_4$, where $\Delta_4$ now represents the Laplacian in $\mathbb{R}^4$, cf.\ for example \cite{GS1989} and many other classical textbooks on quaternionic and Clifford analysis, such as also \cite{bds} and others. In this section we use the symbol $\Delta_4$ instead of $\Delta$ as in the previous section to avoid confusion with $\Delta_3$.  
As a direct consequence of Proposition \ref{pro:am_m} 
we can establish 

\begin{proposition}\label{pro:KGH}
Suppose that $f$ is an axially monogenic solution to $\LLL{1}\LLL{2} f = 0$ on some axially symmetric domain $\Omega \subset \mathbb{H}$. 
\begin{itemize}
\item[(a)] Let $I\in\s$ be any imaginary unit. In the particular case where we take $\lambda_1 = I \lambda$ and $\lambda_2 = - I \lambda$ where $\lambda$ is a non-zero real number, the solutions to  $\LL_{I\lambda}\LL_{-I\lambda} f = 0$ are axially monogenic solutions to the massless stationary Klein-Gordon equation $(\Delta_3 - \lambda^2) f = 0$ on $\Omega^{*} := \Omega \cap \mathbb{R}^3$. 
For any finite subset $A$ of $\s$, the functions of the form
\[
f(x)=\sum_{I\in A}\Delta_4(h_I\cdot\exp_{I\lambda}(x)),
\]
where $h_I\in\SC(\OO)$, are axially monogenic solutions of the Klein-Gordon equation.
\item[(b)] In the other particular case we take $\lambda_1 = \lambda$ and $\lambda_2 =- \lambda$ where again $\lambda$ is supposed to be a non-zero real value, the solutions to  $\LL_{\lambda}\LL_{-\lambda} f = 0$  are axially monogenic solutions to the time-harmonic Helmholtz equation $(\Delta_3 + \lambda^2) f = 0$ on $\Omega^{*} := \Omega \cap \mathbb{R}^3$.
For every $h_1,h_2\in\SC(\OO)$, the function
\[
f(x)=\Delta_4(h_1\cdot\exp_{\lambda}(x))+\Delta_4(h_2\cdot\exp_{-\lambda}(x))
\]
is an axially monogenic solution of the Helmholtz equation.
\end{itemize}
\end{proposition}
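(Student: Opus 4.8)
The plan is to reduce both parts to one algebraic identity for the operator $\LL_{\lambda_1}\LL_{\lambda_2}$ on $\AM(\OO)$ and then specialize the eigenvalues. First I would expand the composition: since right multiplication by a constant quaternion commutes with $\cdcf$, for every $f\in\AM(\OO)$
\[
\LL_{\lambda_1}\LL_{\lambda_2}f=\cdcf^2 f-(\cdcf f)(\lambda_1+\lambda_2)+f\lambda_2\lambda_1 .
\]
The heart of the matter is the identity $\cdcf^2 f=-\Delta_3 f$, valid on all of $\OO$ for every axially monogenic $f$. To establish it I would use that $\dcf f=0$; writing $\cdcf=\dcf-\mathcal{D}_x$ this gives $\cdcf f=-\mathcal{D}_x f$. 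By Proposition~\ref{pro:L} the operator $\cdcf$ maps $\AM(\OO)$ into itself, so $\cdcf f$ is again monogenic and the same relation applies to it; together with $\mathcal{D}_x^2=-\Delta_3$ this yields $\cdcf^2 f=-\mathcal{D}_x(\cdcf f)=\mathcal{D}_x^2 f=-\Delta_3 f$.

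Next I would substitute the prescribed eigenvalues. In case (a), $\lambda_1=I\lambda$, $\lambda_2=-I\lambda$ with $\lambda\in\rr$: the middle term drops out since $\lambda_1+\lambda_2=0$, while $\lambda_2\lambda_1=-I^2\lambda^2=\lambda^2$, so the identity becomes $\LL_{I\lambda}\LL_{-I\lambda}f=-\Delta_3 f+\lambda^2 f=-(\Delta_3-\lambda^2)f$. Hence a solution of $\LL_{I\lambda}\LL_{-I\lambda}f=0$ satisfies the Klein--Gordon equation on $\OO$, and in particular on $\Omega^{*}$ (the operator $\Delta_3$ carries no $x_0$-derivative, so it commutes with restriction to $x_0=0$). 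In case (b), $\lambda_1=\lambda$, $\lambda_2=-\lambda$ real, one finds $\lambda_2\lambda_1=-\lambda^2$ and therefore $\LL_{\lambda}\LL_{-\lambda}f=-(\Delta_3+\lambda^2)f$, the Helmholtz operator.

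For the explicit families I would appeal to Proposition~\ref{pro:am}: for any slice-constant $h$ one has $\Delta_4(h\cdot\exp_\mu(x))\in\ker(\LL_\mu)$, because $\LL_\mu\circ\Delta_4=\Delta_4\circ\Dm$ and $\Dm(h\cdot\exp_\mu)=h\cdot\Dm\exp_\mu=0$. Since eigenvalues in a common slice $\cc_I$ produce commuting operators, for a fixed unit $I$ the operators $\LL_{I\lambda}$ and $\LL_{-I\lambda}$ commute, so $\ker(\LL_{I\lambda})\subseteq\ker(\LL_{I\lambda}\LL_{-I\lambda})$; each $\Delta_4(h_I\cdot\exp_{I\lambda}(x))$ thus solves $\LL_{I\lambda}\LL_{-I\lambda}f=0$ and, by case (a), the Klein--Gordon equation. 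The decisive point is that this equation $(\Delta_3-\lambda^2)f=0$ does not depend on $I$, so the contributions attached to different units $I\in A$ may be superposed, and by linearity $\sum_{I\in A}\Delta_4(h_I\cdot\exp_{I\lambda}(x))$ is again a solution. Case (b) is identical with the real commuting eigenvalues $\pm\lambda$: $\Delta_4(h_1\cdot\exp_\lambda(x))\in\ker(\LL_\lambda)$ and $\Delta_4(h_2\cdot\exp_{-\lambda}(x))\in\ker(\LL_{-\lambda})$ are both annihilated by $\LL_{\lambda}\LL_{-\lambda}$, so their sum solves Helmholtz. I expect the only real obstacle to be the identity $\cdcf^2=-\Delta_3$ on $\AM(\OO)$; once it is secured, the remainder is linear bookkeeping together with the specialization of the two eigenvalues.
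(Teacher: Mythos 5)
Your proposal is correct and follows essentially the same route as the paper: expand $\LL_{\lambda_1}\LL_{\lambda_2}$, prove the key identity $\cdcf^2 f=-\Delta_3 f$ on axially monogenic $f$, specialize the eigenvalues, and obtain the explicit families from the commutation $\LL_\mu\circ\Delta_4=\Delta_4\circ\mathcal{D}_\mu$ together with the kernel description for commuting eigenvalues. The only (harmless) variation is in deriving the key identity: the paper expands $\partial^2=\tfrac14(\partial_{x_0}-\mathcal{D}_x)^2$ directly, using both $\mathcal{D}_xf=-\partial_{x_0}f$ and $\partial_{x_0}^2f=-\Delta_3f$, whereas you iterate the first-order relation $\cdcf f=-\mathcal{D}_xf$ using the invariance of $\AM(\OO)$ under $\cdcf$ from Proposition~\ref{pro:L}, which is an equally valid two-line computation resting on the same facts.
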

\begin{proof}
Since $f$ is monogenic and consequently also an element in the kernel of $\Delta_4$, it holds 
\begin{equation*}\label{eq:monogenic}
\mathcal{D}_xf=-\partial_{x_0}f\quad\text{and}\quad \partial^2_{x_0}f=-\Delta_3f.
\end{equation*}
It follows that
\begin{align*}
\partial^2f&=\frac14(\partial_{x_0}-\mathcal D_x)(\partial_{x_0}-\mathcal D_x)f=\frac14(\partial^2_{x_0}f-\partial_{x_0}\mathcal D_xf-\mathcal D_x\partial_{x_0}f+\mathcal D_x^2 f)\\
&=\frac14(3\partial^2_{x_0}f-\Delta_3f)=-\Delta_3f,
\end{align*}
and then
\begin{align*}
\LL_{\lambda_1}\LL_{-\lambda_1}f&=(\partial-R_{\lambda_1})(\partial-R_{-\lambda_1})f\\
&=\partial^2f - \underbrace{\partial f( \lambda_1 - \lambda_1)}_{=0} - f\lambda_1^2\\
&=-\Delta_3f-f\lambda_1^2.
\end{align*}
We first treat the case announced in statement (a): Putting $\lambda_{1,2} = \pm I \lambda$, with $I\in\s$, one obtains that 
\[
0=\LL_{I\lambda}\LL_{-I\lambda}f=-\Delta_3f+\lambda^2 f.
\]
Analogously one obtains in the other case the statement (b) by inserting $\lambda_{1,2} = \pm \lambda$, namely 
\[
0=\LL_{\lambda}\LL_{-\lambda}f=-\Delta_3f-\lambda^2 f.
\]
Since we are in the case of commuting eigenvalues, in view of Proposition \ref{pro:am_m} and Remark \ref{rem:m}(3), the general axially monogenic solution of the equation $\LL_{\lambda_1}\LL_{-\lambda_1}f=0$ is of the form
\[
g(x)=\Delta_4(h_1\cdot\exp_{\lambda_1}(x))+\Delta_4(h_2\cdot\exp_{-\lambda_1}(x)),
\]
with $h_1$ and $h_2\in\SC(\OO)$. This implies the last statements of items (a) and (b).
\end{proof}

\begin{example}
The axially monogenic function $f(x)=\Delta_4\exp_1(x)=\Delta_4e^x$, restricted to $\IM(\hh)=\rr^3$, satisfies the equation $\Delta_3f+f=0$ on $\rr^3$, while the function $g(x)=\Delta_4\exp_j(x)$ of Example \ref{ex:expj} satisfies the equation $\Delta_3g-g=0$. 

Another solution of the Helmholtz equation $\Delta_3f+f=0$ on $\rr^3\setminus\{0\}=\rr^3\cap(\hh\setminus\rr)$ is given by the axially monogenic function (see Remark \ref{rem:mui}) 
\[
f(x)=\Delta_4(\mu_i\cdot\exp_1(x))=\sum_{n=0}^{+\infty}\frac{\Delta_4(x^n\mu_i(x))}{n!},
\]
while the axially monogenic function 
\[
g(x)=\Delta_4(\mu_i\cdot\exp_j(x))=\sum_{n=0}^{+\infty}\Delta_4(x^n\mu_i(x))\frac{j^n}{n!}
\]
is a solution of the Klein-Gordon equation $\Delta_3g-g=0$ on $\rr^3\setminus\{0\}$.
\end{example}

\begin{remark}
The two cases considered in Proposition \ref{pro:KGH} are included in the more general case $\lambda_1=-\lambda_2=\alpha+\beta I\in\hh$, with $\alpha,\beta$ real and $I\in\s$. If $f\in\AM(\OO)$, it holds $\LL_{\lambda_1}\LL_{-\lambda_1}f=0$ if and only if 
\[
\Delta_3f+(\alpha^2-\beta^2)f+(2\alpha\beta)fI=0.
\]
\end{remark}

\begin{remark}\label{rem:Yukawa}
The Klein-Gordon equation is the homogeneous equation associated to the Yukawa equation $\Delta_3 f-\lambda^2 f=h$, with $\lambda$ real.  If $g$ is slice-regular on $\OO$, $h:=-\Delta_4g$ and $I\in\s$, then $h\in\AM(\OO)$ and every slice-regular solution $\tilde f$ of the equation $\mathcal D_{I\lambda}\mathcal D_{-I\lambda}\tilde f=g$ gives a solution $f:=\Delta_4\tilde f$ of the equation 
$\LL_{I\lambda}\LL_{-I\lambda} f=\Delta_4 g$, which is the Yukawa equation $\Delta_3f-\lambda^2 f=h$.

For example, if $h\in\hh[x_0,x_1,x_2,x_3]$ is an axially monogenic polynomial and $g:=-\widetilde\Delta h\in\hh[x]$, where $\widetilde\Delta$ is the right inverse of $\Delta_4$ defined in the previous section, then $\tilde f$ can be obtained by means of the right inverses operators $\mathcal S_{\pm I\lambda}$ of $\mathcal D_{\pm I\lambda}$ introduced in \eqref{def:Sl}: $\tilde f=\mathcal S_{-I\lambda}\mathcal S_{I\lambda}(g)$.
We then get that 
\[f:=-\Delta_4\mathcal S_{-I\lambda}\mathcal S_{I\lambda}(\widetilde\Delta h)
\]
is an axially monogenic solution of the Yukawa equation with right-hand $h$. 
\end{remark}

\begin{example}
Let $h(x)=\Pn_3(x)-\Pn_2(x)(i+j+k)+\Pn_1(x)(i-j+k)+1$. Then $h(x)=-\frac14\Delta_4(x^2(x-i)\cdot(x-j)\cdot(x-k))$. We take $\lambda=1$, $I=i$. A direct computation shows that the solution
\[
f=\tfrac14\Delta_4\mathcal S_{-i\lambda}\mathcal S_{i\lambda}(x^2(x-i)\cdot(x-j)\cdot(x-k))
\]
of the Yukawa equation $\Delta_3 f-f=h$ has the form
\[
f(x)=-\Pn_3(x)+\Pn_2(x)(i+j+k)+\Pn_1(x)(20-i+j-k)-1-12(i+j+k).
\]
\end{example}


\end{document}